\theoremstyle{plain}
\newtheorem{thm}{Theorem}[section]
\newtheorem*{thm*}{Theorem}
\newtheorem{lm}[thm]{Lemma}
\newtheorem{cor}[thm]{Corollary}
\newtheorem*{cor*}{Corollary}
\newtheorem{prop}[thm]{Proposition}
\newtheorem*{conj*}{Conjecture}
\theoremstyle{remark}
\newtheorem*{remark}{Remark}
\newtheorem*{thank}{Acknowledgments}
\theoremstyle{definition}
\newtheorem*{defn*}{Definition}
\newcommand{\nc}{\newcommand}
\newcommand{\beq}{\begin{equation}}
\newcommand{\eeq}{\end{equation}}
\newcommand{\bpmx}{\begin{pmatrix}}
\newcommand{\epmx}{\end{pmatrix}}
\newcommand{\bbmx}{\begin{bmatrix}}
\newcommand{\ebmx}{\end{bmatrix}}
\newcommand{\wtd}{\widetilde}
\newcommand{\beqcd}[1]{\begin{equation*}\label{#1}\tag{#1}}
\newcommand{\eeqcd}{\end{equation*}}
\numberwithin{equation}{section}
\def\makeop#1{\expandafter\def\csname#1\endcsname
  {\mathop{\rm #1}\nolimits}\ignorespaces}
\def\Sel{Sel}
\DeclareMathAlphabet{\mathpzc}{OT1}{pzc}{m}{it}
\DeclareSymbolFont{cyrletters}{OT2}{wncyr}{m}{n}
\DeclareMathSymbol{\SHA}{\mathalpha}{cyrletters}{"58}
\def\makebb#1{\expandafter\def
  \csname bb#1\endcsname{{\mathbb{#1}}}\ignorespaces}
\def\makebf#1{\expandafter\def\csname bf#1\endcsname{{\bf
      #1}}\ignorespaces}
\def\makegr#1{\expandafter\def
  \csname gr#1\endcsname{{\mathfrak{#1}}}\ignorespaces}
\def\makescr#1{\expandafter\def
  \csname scr#1\endcsname{{\EuScript{#1}}}\ignorespaces}
\def\makecal#1{\expandafter\def\csname cal#1\endcsname{{\mathcal
      #1}}\ignorespaces}
\def\doLetters#1{#1A #1B #1C #1D #1E #1F #1G #1H #1I #1J #1K #1L #1M
                 #1N #1O #1P #1Q #1R #1S #1T #1U #1V #1W #1X #1Y #1Z}
\def\doletters#1{#1a #1b #1c #1d #1e #1f #1g #1h #1i #1j #1k #1l #1m
                 #1n #1o #1p #1q #1r #1s #1t #1u #1v #1w #1x #1y #1z}
\def\abs#1{\left|#1\right|}
\def\Qbarp{\C_p}
\def\Qp{\Q_p}
\def\Zbarp{\ol{\Z}_p}
\def\Zp{\Z_p}
\def\Qbar{\ol{\Q}}
\def\cD{\mathcal D}
\def\cE{{\mathcal E}}
\def\cF{{\mathcal F}}  %Hida family
\def\cL{{\mathcal L}}
\def\cK{{\mathcal K}}  %imaginary quadratic field
\def\cR{{\mathcal R}}
\def\cO{\mathcal O}
\def\cf{{\mathcal f}}
\def\cX{\mathcal X}
\def\cU{\mathcal U}
\def\bfc{\mathbf c}
\def\bdc{\mathbf c}
\def\bftheta{\boldsymbol{\theta}}
\def\bbI{\mathbb I}
\newcommand{\Z}{\mathbf Z}
\newcommand{\Q}{\mathbf Q}
\newcommand{\R}{\mathbf R}
\newcommand{\C}{\mathbf C}
\newcommand{\A}{\mathbf A}    % for adele
\def\bbE{{\mathbb E}}
\def\cond{{c}}
\def\frakc{{\mathfrak c}}
\def\frakm{\mathfrak m}
\def\frakF{{\mathfrak F}}
\def\frakD{\mathfrak D}
\def\frakC{{\mathfrak C}}
\def\zbar{\bar{z}}
\def\isoto{\stackrel{\sim}{\to}}
\def\ot{\otimes}
\def\hookto{\hookrightarrow}
\def\longto{\longrightarrow}
\def\ol{\overline}  \nc{\opp}{\mathrm{opp}} \nc{\ul}{\underline}
\def\cf{\mbox{{\it cf.} }}
\def\uf{\varpi} %uniformizer
\def\Abs{{|\!\cdot\!|}} %adelic absolute value
\def\Sg{{\varSigma}}  %%CM type
\def\ndivides{\nmid}
\def\ndivide{\nmid}
\def\x{{\times}}
\def\onehalf{{\frac{1}{2}}}
\def\con{\equiv}
\def\bksl{\backslash}
\newcommand\stt[1]{\left\{#1\right\}}
\def\lam{\lambda}
\def\AFf{\A_{\cF,f}}
\def\AKf{\A_{\cK,f}}
\def\AF{\A_\cF}
\def\AK{\A_\cK}
\def\setp{{(p)}}
\newcommand{\powerseries}[1]{\llbracket{#1}\rrbracket}
\renewcommand\pmod[1]{\,(\mbox{mod }{#1})}
\renewcommand\mod[1]{\,\mbox{mod }{#1}}
\title{On the $\mu$-invariant of the cyclotomic derivative \linebreak of Katz $p$-adic L-function\\
%\vspace{5mm}
%Sur le $\mu$-invariant de la d$\acute{E}$riv$\acute{E}$e cyclotomique \linebreak de Katz $p$-adique L-fonction
}
\author{Ashay A. Burungale}
\date{\today}
\address{ Department of Mathematics~\\UCLA ~ \\
Los Angeles, CA 90095-1555, USA
}
\email{ashayburungale@gmail.com}
\subjclass[2010]{Primary 11F33, 11F41, 11G18 Secondary 11R23}
\keywords{Iwasawa invariant, Katz $p$-adic L-function, Hilbert modular Shimura variety, toric modular forms, anticyclotomic regulator}
\def\Csplit{\frakF}
\def\holES{\bbE^h}
\def\padic{\text{$p$-adic }}
\begin{document}
\maketitle

\begin{abstract}
When the branch character has root number $-1$, the corresponding anticyclotomic Katz $p$-adic \\
L-function identically vanishes. In this case, we determine the $\mu$-invariant of the cyclotomic derivative of Katz $p$-adic L-function.  
As an application, the result proves the non-vanishing of the anticyclotomic regulator of a self-dual CM modular form with root number
$-1$. The result also plays a crucial role in the recent work of Hsieh on the Eisenstein ideal approach to a one-sided divisibility of the CM main conjecture.\\
%\\
%Lorsque le caract$\acute{e}$re branche a le nombre de racines $-1$, le correspondant de lutte contre cyclotomique Katz $p$-adique
%L-fonction identiquement nulle. Dans ce cas, nous $\acute{e}$tudions l'invariant de la d$\acute{e}$riv$\acute{e}$e 
%cyclotomique de Katz $p$-adique L-fonction. Comme application, ce qui prouve la non-disparition de l'organisme de 
%r$\acute{e}$glementation de lutte contre cyclotomique d'un auto-duale forme modulaire CM avec le nombre de racines
%$-1$.\\
\\
\end{abstract}
\tableofcontents

\section{ Introduction}
\noindent Zeta values often enter the $p$-adic world via $p$-adic L-functions. One expects that $p$-adic L-functions are intimately
connected with arithmetic. A $p$-adic L-function can have several variables. In such a case, we may expect that a power series
obtained by taking its partial derivative with respect to one of the variables at a specific value of that variable, also has some 
arithmetic meaning.\\
\\
Katz $p$-adic L-function over a totally real field of degree $d$ has $(d+1+\delta)$-variables, where $\delta$ is the Leopoldt defect
for the totally real field. In this article, by Katz $p$-adic L-function we mean the projection to the
first $(d+1)$-variables i.e. the anticyclotomic and cyclotomic variables. When the branch character is self-dual with the
root number $-1$, the corresponding anticyclotomic Katz $p$-adic L-function of $d$-variables identically vanishes. In this article,
we study the $\mu$-invariant of the cyclotomic derivative of the Katz $p$-adic L-function when the branch character is of this type. Following a strategy of Hida, we determine this $\mu$.\\
\\
Let us introduce some notation. Fix an odd prime $p$. Let $\cF$ be a totally real field of degree $d$ over $ \Q$ and $\cO_{\cF}$ be the ring of integers. 
Let $\cK$ be a totally imaginary quadratic extension of $\cF$.
Let $D_{F}$ be the discriminant of $\cF/ \Q$. Fix two embeddings $\iota_{\infty}\colon \Qbar \to \C$ and $\iota_{p}\colon \Qbar \to \Qbarp$. Let $c$ denote the complex conjugation on $\C$ which induces the unique non-trivial element of $\Gal(\cK/\cF)$ via $\iota_{\infty}$. We assume the following hypothesis throughout:\\
\\
{$(ord)$}  \text{Every prime of $\cF$ above $p$ splits in $\cK$.}\\
\\
The condition $(ord)$ guarantees the 
existence of a $p$-adic CM type $\Sigma$ i.e. $\Sigma$ is a CM type of $\cK$ such that, $p$-adic places induced by elements
in $\Sigma$ via $\iota_{p}$ are disjoint from those induced by $\Sigma c$. 
Let $\cK_{\infty}^{+}$ and $\cK_{\infty}^{-}$ be the cyclotomic $\Zp$-extension and anticyclotomic $\Zp^{d}$-extension of $\cK$. 
Let $\cK_{\infty}=\cK_{\infty}^{+} \cK_{\infty}^{-}$ be a $\Zp^{d+1}$-extension of $\cK$.  Let $\Gamma^{\pm}:=\Gal(\cK^{\pm}_{\infty}/\cK)$ and let $\Gamma=\Gal(\cK_{\infty}/\cK) \simeq \Gamma^{+} \times \Gamma^{-}$.\\
\\
Let $\mathfrak{C}$ be a prime-to-$p$ integral ideal of $\cK$. Decompose $\mathfrak{C}=\mathfrak{C^{+}} \mathfrak{C^{-}}$,
where $\mathfrak{C^{+}}$ (respectively $\mathfrak{C^{-}}$) is a product of split primes (respectively ramified or inert primes) over $\cF$.
Let $\lambda$ be a Hecke character of infinity type $k\Sigma$, $k>0$ and
suppose that $\mathfrak{C}$ is the prime-to-$p$ conductor of $\lambda$. 
The Hecke character $\lam$ is $\overline{\Q}$-valued; we still denote by $\lam$ its composition with $\iota_p$ or $\iota_\infty$; the context will make 
it clear which is understood. 
Associated to this data, a $(d+1)$-variable Katz $p$-adic L-function
$L_{\Sigma,\lambda}(T_{1},T_{2}, ..., T_{d},S)\in \Zbarp \powerseries{\Gamma}$ is constructed in \cite{Ka} and \cite{HT}. Here $T_{1}, ..., T_{d}$ are the anticyclotomic
variables and $S$ is the cyclotomic variable. We occasionally abbreviate this function as $L_{\Sigma,\lambda}$. 
It interpolates critical Hecke L-values $L(0,\lam\chi)$ as $\chi$ varies over certain Hecke characters mod $\mathfrak{C}p^{\infty}$ 
(cf. \cite[Thm. II]{HT}). Let $L_{\Sigma,\lambda}^{-} \in \Zbarp\powerseries{\Gamma^{-}}$ be the anticyclotomic projection obtained by 
substituting $S=0$.\\
\\
For each local place $v$ of $\cF$, choose a uniformiser $\varpi_{v}$ and let $|\cdot|_{v}$ denote the corresponding absolute value normalised so that 
$|\varpi_v|_{v}=|N(\varpi_v)|_l$ and $|l|_{l}=\frac{1}{l}$, where 
$N$ is the norm, $v \cap \Q = (l)$ and $l > 0$. Let $v_{p}$ be the $p$-adic valuation of $\C_p$ normalised such that $v_{p}(p)=1$. 
We view it as a function on $\Qbar$ via $\iota_{p}$. 
Let $N$ be the norm Hecke character i.e. the adelic realisation of the $p$-adic cyclotomic character. 
For each $v$ dividing $\mathfrak{C^{-}}$ and $\lambda$ as above,
the local invariant $\mu_{p}(\lambda_{v})$ is defined by\\
\beq\label{E} \mu_{p}(\lambda_{v}) = \inf_{x \in \cK_{v}^{\times}} v_{p}(\lambda_{v}(x) - 1).\eeq
Let us also define 
\beq\label{E} \mu_{p,v}'(\lam) = v_{p}(\frac{\log_{p}(|\varpi_{v}|_{v})}{\log_{p}(1+p)}) + \sum_{w \neq v, w|\mathfrak{C^{-}}}^{} \mu_{p} (\lam_{w}) \eeq
and \beq\label{E}\mu_{p}'(\lam) = \sum_{v|\mathfrak{C^{-}}}^{} \mu_{p}(\lam_{v}).\eeq
From now on, suppose that $\lambda$ is self-dual i.e. $\lambda|_{\bf{A}_{\cF}^{\times}}=\tau_{\cK/\cF}|\cdot|_{\bf{A}_{\cF}}$, where $\tau_{\cK/\cF}$
is the quadratic character associated to $\cK/\cF$ and $|\cdot|_{\bf{A}_{\cF}}$ is the adelic norm. 
In particular, the root number of $\lambda$ is $\pm1$. When the root number equals one, the anticyclotomic $\mu$-invariant $\mu(L_{\Sg,\lam}^{-})$ has been studied 
in \cite{Hi} and \cite{Hs1}. 
Now suppose that the global root number is $-1$.
In view of the functional equation of Hecke L-function, this root number condition forces all the Hecke L-values appearing in the interpolation
property of $L_{\Sigma,\lambda}^{-}$ to vanish. Accordingly, $L_{\Sigma,\lambda}^{-}=0$. This also follows from the functional 
equation of $L_{\Sigma,\lam}$ (cf. \cite[\S 5]{HT}). The anticyclotomic arithmetic information contained
in $L_{\Sigma,\lambda}$ may seem to have disappeared. However, we can look at the cyclotomic derivative 
\beq L_{\Sigma,\lambda}^{'}= (\frac{\partial}{\partial S} L_{\Sigma, \lam}(T_{1},...,T_{d},S) )|_{S=0} .\eeq
In what follows our meaning of derivative is the following. 
Let $f$ be a function from integers to a $p$-adic domain of characteristic different from $p$. We define
\beq\label{E} \frac{d}{ds}f(s)|_{s=0} := \lim_{n \rightarrow \infty} \frac { f(p^{n}) - f(0)} {p^{n}}. \eeq
Here $\lim$ denotes the $p$-adic limit. Note that the Leibnitz product rule is valid for this notion of derivative.\\
\\
For any integer $k$, $L_{\Sigma,\lambda}(T_{1},T_{2}, ..., T_{d}, (1+p)^{k}-1)$ equals $L_{\Sigma, \lambda N^{k}}^{-}$ and 
$\lim_{n \rightarrow \infty} ( (1+p)^{p^{n}}-1 )$ equals zero.
Thus, the cyclotomic derivative equals
\beq\label{E} \frac{1}{\log_{p}(1+p)}(\frac{d}{ds} L_{\Sigma, \lambda N^{s}}^{-} )|_{s=0} \in \Zbarp\powerseries{\Gamma^{-}}.\eeq
Note that the factor $\frac{1}{\log_{p}(1+p)}$ comes from the fact that
$$ \lim_{n \rightarrow \infty} \frac { (1+p)^{p^{n}} - 1} {p^{n}}= \log_p(1+p).$$
\\
Here $\log_p$ is Iwasawa's $p$-adic logarithm normalised so that $\log_p(p)=0$.\\
\\
The following is perhaps the main result of the article.\\
\\
\\
{\bf Theorem A} Let $h_\cK^-:=h_\cK/h_\cF$ be the relative class number. Suppose that $p\ndivide h_\cK^-\cdot D_\cF$. Then, we have\\
$$\mu(L_{\Sigma,\lam}^{'}) = \min_{v|\mathfrak{C^{-}}} \{\mu_{p}'(\lam), \mu_{p,v}'(\lam)\}.$$
\\
\\
We now describe the strategy of the proof. Some of the notation used here is not followed in the rest
of the article.\\
\\
We basically follow a strategy of Hida. Let us briefly recall Hida's strategy to determine $\mu(L_{\Sigma,\lam}^{-})$ (cf. \cite{Hi}).
Suppose that $p\ndivide h_\cK^-\cdot D_\cF$. Let $O_{p}=\cO_{\cF} \otimes \mathbb{Z}_p$. 
Let $G_{\Sigma,\lam} \in \Zbarp \powerseries{T_1,...,T_d}$ be the power series expansion of the 
measure $L_{\Sigma,\lam}^-$ regarded as a $p$-adic measure on $O_p$ with support in $1+pO_p$, given by 
\beq G_{\Sigma,\lam}= \int_{1+pO_p} t^{y} dL_{\Sigma,\lam}^{-}(y) = \sum_{(k_1,...,k_d)\in (\Z_{\geq 0})^{d}} \Big{(}\int_{1+pO_p} \binom {y}{k_1,...,k_d} dL_{\Sigma,\lam}^{-}(y) \Big{)} T_1^{k_1}...T_{d}^{k_d}. \eeq
The starting point is the observation that there are classical Hilbert modular Eisenstein series $(f_{\lam,i})_i$ such that
\beq 
G_{\Sigma,\lam}=\sum_{i}a_i\circ(f_{\lam,i}(t)),
\eeq
where $f_{\lam,i}(t)$ is the
$t$-expansion of $f_{\lam,i}$ around a well chosen CM point $x$ with the CM type $(\cK,\Sigma)$ 
on the Hilbert modular Shimura variety $Sh$ and $a_i$ is an automorphism of the deformation space of $x$ in $Sh$. 
Based on Chai's study of Hecke-stable subvarieties of a Shimura variety, Hida has proven the linear independence of $(a_i\circ f_{\lam,i})_i$
modulo $p$. It follows that $\mu(L_{\Sigma,\lam}^{-})=\min_{i} \mu(f_{\lam,i}(t))=\min_{i}\mu(f_{\lam,i})$. 
Now $\mu(f_{\lam,i})=\mu(f_{\lam,i}(q))$, where $f_{\lam,i}(q)$ is the $q$-expansion of $f_{\lam,i}$. 
Thus, the question reduces to the computation of the $\mu$-invariant of the 
Fourier expansion of $f_{\lam,i}$ by the $q$-expansion principle. When $\mathfrak{C}^{-}=1$, this computation can be done quite explicitly. 
However, when $\mathfrak{C}^{-}\neq1$, the computation seems quite complicated.
As an alternative, Hsieh has introduced certain Hilbert modular Eisenstein series $({\bf{f}}_{\lam,i})_i$ 
called toric Eisenstein series 
whose
$q$-expansion compuation is a bit simpler than that of $(f_{\lam,i})_i$ such that the property (1.8) still holds i.e.
the power series $G_{\Sigma,\lam}$ equals $\sum_{i}a_i\circ({\bf{f}}_{\lam,i}(t))$
(cf. \cite{Hs1}). The Fourier coefficients of these toric Eisenstein series admit a product decomposition of local Whittaker integrals. In \cite{Hi} and \cite{Hs1}, the condition
$p\ndivide h_\cK^-$ is not needed as in general the power series $L_{\Sigma,\lam}^{-}$ restricted to
an explicit finite open cover is still of the form (1.8). 
However, to obtain an expression for the cyclotomic derivate $L_{\Sg,\lam}^{'}$ of the form (1.8), the condition seems to be necessary.\\
\\
In our case, the root number condition implies that $\mathfrak{C^{-}} \neq 1$. So, we use the toric Eisenstein series. 
Let $G_{\Sigma,\lam}' \in \Zbarp \powerseries{T_1,...,T_d}$ be the power series expansion of $L_{\Sigma,\lam}'$ defined as in (1.8). 
We show that there are $p$-adic Hilbert modular forms $({\bf{f}}_{\lam',i})_i$ such that
\beq
G_{\Sigma,\lam'}=\sum_{i}a_i\circ({\bf{f}}_{\lam',i}(t)).
\eeq
Basically, ${\bf{f}}_{\lam',i}$ is the derivative of ${\bf{f}}_{\lam N^s,i}$ at $s=0$ (cf. (1.5)).
Via Hida's strategy, the question then reduces to the computation of the $\mu$-invariant of $q$-expansion of $({\bf{f}}_{\lam',i})_i$.
However, the expression for the $q$-expansion coefficients does not seem to be explicit.
From the vanishing of the toric Eisenstein series associated to $\lam$ and the Leibnitz product rule, the coefficients admit a product decomposition of local Whittaker integrals and derivative. 
We obtain an expression for the derivative. Based on \cite{Hs1} and \cite{Hs2}, 
we analyse the non-triviality of the local Whittaker integrals and derivative modulo $p$ and 
by patching determine the non-triviality of ${\bf{f}}_{\lam',i}$ modulo $p$.\\
\\
When $\cF$ equals $\Q$ and $\lam$ is the Gr\"{o}ssencharacter associated to a CM elliptic curve $E/\Q$ having CM by $\cO_K$, 
the Katz $p$-adic L-function, $L_{\Sigma,\lam}$ is the two variable commutative $p$-adic L-function associated to $E$. 
In \cite {Ru}, Rubin proves that $L_{\Sigma,\lam}$ generates the characteristic ideal of a certain Selmer group associated to $E/\cK_\infty$.
This two variable main conjecture gives cyclotomic and anticyclotomic main conjectures. 
When $\lam$ has root number $-1$, both sides of the anticyclotomic main conjecture are zero (cf. \cite{AH}). 
However, in [loc. cit., Thm A] it is shown that $L_{\Sigma,\lam}^{'}$ generates the characteristic ideal of the torsion part of the 
anticyclotomic Selmer group times a certain anticyclotomic regulator associated to $\lam$ after tensoring with $\Q_p$. 
In the appendix of \cite{AH}, Rubin proves the non-vanishing of this anticyclotomic regulator. Thus, $L_{\Sigma,\lam}^{'}$ is non-trivial.
The results of [loc. cit.] have been generalised to self-dual CM modular forms in \cite{A}, 
except the non-vanishing of the anticyclotomic regulator. 
Theorem A proves the non-vanishing. 
This seems to be one of the first instances where the
non-vanishing of an Iwasawa theoretic regulator is proven by a modular method (combined with the main conjecture).\\
\\
We now make brief remarks regarding the role in the one-sided divisibility of the CM main conjecture for Hecke characters. We refer to the introduction and \S8 of \cite{Hs3} for the details. 
In \cite{Hs3}, Hsieh constructs a $\Lambda$-adic 
Eisenstein series on $U(2,1)_{/\cF}$ whose constant term is a product of $L_{\Sigma,\lam}$ and a Deligne-Ribet $p$-adic L-function $L_{DR}$, which contribute to two distinct Selmer groups respectively. 
It turns out that the Eisenstein congruence method does not directly allow to distinguish their contribution to the individual Selmer groups, unless the common zeros of $L_{\Sigma,\lam}$ and $L_{DR}$ 
are simple zeros. When the global root number of $\lam$ equals one, the results on $\mu(L_{\Sigma,\lam}^{-})$ imply that $L_{\Sigma,\lam}$ and $L_{DR}$ have no common zeros. In the case of root number $-1$, 
Theorem A implies that $L_{\Sigma,\lam}$ and $L_{DR}$ have at most one simple zero. Thus, the proof of the one-sided divisibility can be completed.\\
\\
When $\lam$ is self-dual without any condition on the root number, the method in this paper can be used to 
study the non-triviality modulo $p$ of certain higher order anticyclotomic derivatives $L^{(k)}_{\Sigma,\lam}$. 
As a consequence, we can show that $L_{\Sigma,\lam}\in \Zbarp\powerseries{\Gamma}$ is not a polynomial. It would be interesting to see whether this fact has 
an arithmetic consequence.\\
\\
It seems likely that $L_{\Sigma,\lam}^{'}$ generates the characteristic ideal of the torsion part of the anticyclotomic Selmer group 
upto an anticyclotomic regulator.
Another interesting question would be whether a normalisation of $L_{\Sigma,\lam}^{'}$ interpolates a 
normalisation of (complex) derivative L-values.\\
\\
The article is perhaps a follow up to the articles \cite{Hi} and \cite{Hs1}. We refer to them for general introduction. 
In exposition, we suppose that the reader is familiar with them, particularly with \cite{Hs1}.\\
\\
The article is organised as follows. In \S 2, we obtain an expression for the power series associated to the cyclotomic derivative of Katz $p$-adic L-function
in terms of the $t$-expansion of certain $p$-adic Hilbert modular forms around a well chosen CM point (cf. (1.9)). In \S2.1, we firstly recall the formula for the Fourier coefficients of
the toric Eisenstein series in \cite{Hs1} where it is used to construct Katz $p$-adic L-function. In \S2.2, we then construct the $p$-adic modular forms  ${\bf{f}}_{\lam',i}$ 
and obtain the expression (1.9). 
In \S3, we prove Theorem A. In \S3.1, we firstly prove the vanishing of the toric Eisenstein series associated to $\lam$. 
In \S3.2-3.4, Theorem A is then proven. 
In \S4, as an application, we prove the non-vanishing of the anticyclotomic regulator in \cite{A}.\\
\begin{thank}
We are grateful to our advisor Haruzo Hida for continuous guidance and encouragement. The question was suggested by him.
We thank Ming-Lun Hsieh for patiently answering our questions regarding \cite{Hs1} and \cite{Hs2} and also for his assistance. 
We thank friends for prompting us to stay in UCLA during the summer of 2011. Most of the current work was done during this period. 
We thank Adebisi Agboola, Mahesh Kakde, Chandrashekhar Khare, Richard Taylor and Kevin Ventullo for interesting conversations about the topic. 
Finally, we thank the referees for constructive criticism and helpful suggestions.\\
\end{thank}
\noindent {\bf{Notation}}
We use the following notation unless otherwise stated.\\
\\
Let $\cF_+$ denote the totally positive elemnts in $\cF$. We sometime use $O$ to denote the ring of integers $\cO_\cF$. Let $\cD_\cF$ (resp. $D_\cF$) be the 
different (resp. discriminant) of $\cF/\Q$.  Let $\cD_{\cK/\cF}$ (resp. $D_{\cK/\cF}$) be the 
different (resp. discriminant) of $\cK/\cF$. Let ${\bf{h}}$ (resp. ${\bf{h}}_\cK$) be the set of finite places of $\cF$ (resp. $\cK$).
Let $v$ be a place of $\cF$ and $w$ be a place of $\cK$ above $v$. Let $\cF_v$ be the completion of $\cF$ at $v$, $\varpi_v$ an uniformizer 
and $\cK_{v} = \cF_{v} \otimes_{\cF} \cK$. For the $p$-ordinary CM type $\Sg$ of $\cK$ as above, let
$\Sg_p = \{ w \in {\bf{h}}_\cK | w|p$ and $w$ induced by $\iota_p \circ \sigma$, for $\sigma \in \Sg\}$.\\
\\
For a number field $\cL$, let ${\bf{A}}_{\cL}$ be the adele ring, ${\bf{A}}_{\cL,f}$ the finite adeles and ${\bf{A}}_{\cL,f}^{\Box}$ the finite adeles away from a finite set of places 
$\Box$ of $\cL$. For $a \in \cL$, let $\mathfrak{il}_{\cL}(a)=a(\cO_\cL \otimes_{\Z} \widehat{\Z}) \cap \cL$. 
Let $G_\cL$ be the absolute Galois group of $L$ and $\rec_{\cL}: {\bf{A}}_{\cL}^\times \rightarrow G_{\cL}^{ab}$ the geometrically normalized reciprocity law. 
Let $\psi_\Q$ be the standard additive character of ${\bf{A}}_\Q$
such that $\psi_\Q(x_\infty)=\exp(2\pi i x_{\infty})$, for $x_\infty \in \R$. Let $\psi_\cL : {\bf{A}}_{\cL}/ \cL \rightarrow \C$ be given by 
$\psi_{\cL}(y)=\psi_{\Q}\circ (\Tr_{\cL/\Q}(y))$, for $y \in {\bf{A}}_{\cL}$. We denote $\psi_\cF$ by $\psi$.\\
\\
\section{Cyclotomic derivative}
\def\ads{\chi}
\def\nads{\chi^*}
\def\addchar{\psi}
\def\Fv{F}
\def\Kv{E}
\def\OFv{O_v}
\def\OF{O}
\noindent In this section, we obtain an expression for the power series associated to the cyclotomic derivative of Katz $p$-adic L-function
in terms of the $t$-expansion of certain $p$-adic Hilbert modular forms around a well chosen CM point (cf. (1.9)).\\
\\
Let $\ads$ be a Hecke character of infinity type $k\Sg + \kappa(1-c)$, where $k\geq 1$ and 
$\kappa = \sum {\kappa_{\sigma} \sigma} \in \Z[\Sigma]$ with $\kappa_{\sigma} \geq 0$. In \S 2.1, 
we recall the formula for the Fourier coefficients of
the toric Eisenstein series in \cite{Hs1} where it is used to construct Katz $p$-adic L-function for $\chi$. 
In \S 2.2, we obtain an expression for the power series associated to the cyclotomic derivative of Katz $p$-adic L-function
in terms of the $t$-expansion of certain $p$-adic Hilbert modular forms around a well chosen CM point (cf. (1.9)).\\
\\
\subsection{Toric Eisenstein series}
In this subsection, we recall the formula for the Fourier coefficients of
the toric Eisenstein series in \cite{Hs1} where it is used to construct Katz $p$-adic L-function.\\
\\
Suppose that $\frakC$ is the prime-to-$p$ conductor of $\ads$. 
We write $\frakC=\frakC^+\frakC^-$ such that $\frakC^+$ (resp. $\frakC^-$) is a product of prime factors split (resp. non-split) over $\cF$. 
We further decompose $\frakC^+=\Csplit\Csplit_c$ such that $(\Csplit,\Csplit_c)=1$ and $\Csplit\subset\Csplit_c^c$.
Let 
\[\frakD=p\frakC\frakC^c D_{\cK/\cF}.\]\\
Let $\cU_p$ be the torsion subgroup of $\cO_{\cF_p}^\x$. Let $u=(u_v)_{v|p}\in\cU_p$. Let $\bfc=(\bfc_v)\in \A_{\cF,f}^{\times}$ such that $\bdc_v=1$ at $v|\frakD$ and 
$\frakc=\mathfrak{il}_{\cF}(\bfc)$.\\
\\
We put
 \[\nads=\ads\Abs^{-\onehalf}_{\AK}\text{ and }\ads_+=\ads|_{\AF^\x}.\]
Let $v$ be a place of $\cF$. Let $\Fv=\cF_v$ (resp. $\Kv=\cK\ot_{\cF}\cF_v$) and $O_v$ be the ring of integers of $\Fv$. 
Denote by $z\mapsto\zbar$ the complex conjugation. Let $\Abs$ be the standard absolute values on $F$ and 
let $\Abs_E$ be the absolute value on $E$ given by $\abs{z}_E:=\abs{z\zbar}$. Let $d_F=d_{\cF_v}$ be a 
fixed generator of the different $\cD_\cF$ of $\cF/\Q$. 
For a set $Y$, denote by $\bbI_Y$ the characteristic function of $Y$. Let $\theta_v \in E$ be as in \cite[\S3.2]{Hs1}.\\
\begin{prop}\label{P:1.N} There exists a toric Hilbert modular Eisenstein series  $\holES_{\ads,u}$ of weight $k$ with the following property. 
Say the $q$-expansion of $\holES_{\ads,u}$ at the cusp $(O,\frakc^{-1})$ is given by
\[\holES_{\ads,u}|_{(\OF,\frakc^{-1})}(q)=\sum_{\beta\in \cF_+ }\bfa_\beta(\holES_{\ads,u},\frakc)\cdot q^\beta.\]
Then, the $\beta$-th Fourier coefficient $\bfa_\beta(\holES_{\ads,u},\frakc)$ is given by
\begin{align*}\bfa_\beta(\holES_{\ads,u},\frakc)=&\beta^{(k-1)\Sg}\prod_{w|\Csplit}\chi_{w}(\beta)\bbI_{\OFv^\x}(\beta)\prod_{w\in\Sg_p}\chi_w(\beta)\bbI_{u_v(1+\uf_v\OFv)}(\beta)\\
 &\times \prod_{v\ndivide \infty\frakD}\bfa_{\beta,v}(\bfc_v,\chi) \prod_{v|\frakC^- D_{\cK/\cF}}A_{\beta,v}(\chi),
\end{align*}
where $\bfa_{\beta,v}(\bfc_v,\chi)=\sum_{i=0}^{v(\bfc_v)}\nads(\uf_v^i)$, $A_{\beta,v}(\chi)= L(0,\ads_v)\wtd A_{\beta}(\ads_v)$ and
\beq\label{E:AB.V}\begin{aligned} \wtd A_\beta(\ads_v)=&\int_{\cF_v}\ads_v^{-1}\Abs_E^s(x_v+\bftheta_v)\addchar(-d_{\cF_v}^{-1}\beta x_v)dx_v|_{s=0}\\
:=&\lim_{n\to\infty}\int_{\uf_v^{-n}\cO_{\cF_v}}\ads_v^{-1}(x_v+\bftheta_v)\addchar(-d_{\cF_v}^{-1}\beta x_v)dx_v.\end{aligned}
\eeq
\end{prop}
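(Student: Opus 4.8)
The plan is to realise $\holES_{\ads,u}$ as the descent to a classical Hilbert modular form of an adelic Eisenstein series on $\GL_2/\cF$ built from a factorisable toric section, and then to read off the Fourier coefficients from the Whittaker expansion. Viewing $\cK$ as a rank-two $\cF$-algebra gives an embedding $\Res_{\cK/\cF}\Gm\hookrightarrow\GL_2/\cF$, and for each place $v$ of $\cF$ I would choose a local vector $\phi_v$ in a degenerate principal series $I(s,\ads_+)$ (equivalently a Bruhat--Schwartz function on $\cF_v^2$) transforming under the local torus $\Kv^\times$ by $\ads_v$. The archimedean component is the holomorphic weight-$k$ vector; at the finite good places $v\ndivide\frakD$ it is the $\bfc_v$-twisted spherical section; at $w\mid\Csplit$ and $w\in\Sg_p$ it is the toric (ordinary) vector localised near $u$; and at $v\mid\frakC^-D_{\cK/\cF}$ it is the toric vector attached to $\theta_v$ of \cite[\S3.2]{Hs1}. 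Setting $\phi=\otimes_v\phi_v$, the Eisenstein series is $E(g,s;\phi)=\sum_{\gamma\in B(\cF)\backslash\GL_2(\cF)}\phi(\gamma g,s)$.

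Next I would compute its Whittaker expansion. Unfolding along the unipotent radical $N$ and using the Bruhat decomposition, the $\beta$-th Fourier coefficient factors, by the factorisability of $\phi$ and the standard unfolding, as a product $\prod_v W_{\beta,v}(\phi_v,s)$ of local Whittaker integrals. Specialising $s$ to the point giving weight $k$ and translating through the usual dictionary between automorphic forms on $\GL_2(\AF)$ and Hilbert modular forms --- with the cusp $(\OF,\frakc^{-1})$ corresponding to the idele $\bfc$ via $\frakc=\mathfrak{il}_{\cF}(\bfc)$ --- I would identify $\bfa_\beta(\holES_{\ads,u},\frakc)$ with $\prod_v W_{\beta,v}(\phi_v)$.

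Then I would evaluate each local factor in turn: the archimedean integral at the holomorphic vector collapses to the monomial $\beta^{(k-1)\Sg}$; the split places $w\mid\Csplit$ give $\ads_w(\beta)\bbI_{\OFv^\x}(\beta)$; the ordinary vectors at $w\in\Sg_p$ localise to $\ads_w(\beta)\bbI_{u_v(1+\uf_v\OFv)}(\beta)$; the spherical places $v\ndivide\infty\frakD$ give the geometric sum $\bfa_{\beta,v}(\bfc_v,\ads)=\sum_{i=0}^{v(\bfc_v)}\nads(\uf_v^i)$; and the toric vector at $v\mid\frakC^-D_{\cK/\cF}$ produces $A_{\beta,v}(\ads)=L(0,\ads_v)\wtd A_\beta(\ads_v)$, with $\wtd A_\beta(\ads_v)$ the regularised toric integral~(\ref{E:AB.V}).

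The main obstacle is this last, ramified/inert, local computation together with the attendant normalisation bookkeeping. The naive integral defining $\wtd A_\beta(\ads_v)$ need not converge, so it must be read as the value at $s=0$ of the meromorphically continued integral against $\Abs_E^s$, equivalently the stabilised limit over $\uf_v^{-n}\cO_{\cF_v}$; one must verify that the global Eisenstein series is holomorphic at this special value, so that $\holES_{\ads,u}$ is a genuine holomorphic form of weight $k$ rather than merely nearly holomorphic, and that the level and support conditions on $\beta$ emerge correctly. Tracking the local different $d_{\cF_v}$ inside the additive character, the measure normalisations, and the extraction of the local factor $L(0,\ads_v)$ is the delicate point; these are precisely the computations of \cite{Hs1} (and \cite{Hs2}), to which I would reduce.
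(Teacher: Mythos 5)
Your proposal is correct and coincides with the paper's own proof, which consists entirely of citing the construction in \cite[\S4.1--4.5]{Hs1} and the local Whittaker computations in \cite[\S4.3]{Hs2} (cf. \cite[Prop.~4.1 and Prop.~4.4]{Hs1}); your sketch simply unpacks the content of those references (adelic Eisenstein series from a factorisable toric section, unfolding, local evaluation) before reducing the delicate ramified/inert computations to the same sources. No gap.
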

\begin{proof} The construction is given in \cite[\S4.1-4.5]{Hs1}. The formula follows from the calculations of local Whittaker integrals of the special local 
sections in \cite[\S 4.3]{Hs2} (\cf \cite[Prop.\,4.1 and Prop.\,4.4]{Hs1}).\\
\end{proof}
\noindent
\begin{remark} Note that the formulas for the local Whittaker integrals 
$\bfa_{\beta,v}({\bf{c}}_v,\chi)$ and $A_{\beta,v}(\chi)$ are well defined for $\beta \in \cF_v$ as well. This is used later in \S3.3 and \S3.4.\\
\end{remark}
\noindent For $a\in \A_{\cK,f}^{\setp}$, let $\mathfrak{c}(a)=\mathfrak{c}(\cO_\cK)N_{\cK/ \cF}(\mathfrak{a})$. 
Here $\mathfrak{a}=\mathfrak{il}_{\cF}(a)$ and $\mathfrak{c}(\cO_\cK)$ is as in \cite[\S3.4]{Hs1}.\\
\\
Let $\holES_{\ads,u,a}=\holES_{\ads,u}|_{\mathfrak{c}(a)}$ (cf. \cite[Prop. 4.4]{Hs1}). Let $\mathcal{E}_{\chi,u}$ (resp. $\mathcal{E}_{\chi,u,a}$) be the $p$-adic avatar of $\holES_{\ads,u}$ (resp. $\holES_{\ads,u,a})$
i.e. the corresponding $p$-adic Hilbert modular form. Let $\widehat{\chi}$ be the $p$-adic avatar of $\chi$ i.e. the corresponding $p$-adic Hecke character. Let 
$\mathcal{E}_{\widehat{\chi},u}=\theta^{\kappa} \mathcal{E}_{\ads,u}$ and $\mathcal{E}_{\widehat{\chi},u,a}=\theta^{\kappa} \mathcal{E}_{\ads,u,a}$, 
where $\theta^{\kappa}= \prod_{\sigma} \theta(\sigma)^{\kappa_{\sigma}}$ and $\theta(\sigma)$ 
is the Katz $p$-adic differential operator corresponding to $\sigma$ (cf. \cite[cor. 2.6.25]{Ka}). For later purpose, we recall the following fact.\\
\\
Say the $q$-expansion of $\mathcal{E}_{\widehat{\chi},u}$ at the cusp $(O,\frakc^{-1})$ is given by
$\mathcal{E}_{\widehat{\chi},u}|_{(\OF,\frakc^{-1})}(q)=\sum_{\beta\in \cF_+ }\bfa_\beta(\mathcal{E}_{\widehat{\chi},u},\frakc)\cdot q^\beta.$
Then, the $\beta$-th Fourier coefficient $\bfa_\beta(\mathcal{E}_{\widehat{\chi},u},\frakc)$ is given by
\beq
\bfa_\beta(\mathcal{E}_{\widehat{\chi},u},\frakc)=\iota_p(\beta^{\kappa} \bfa_\beta(\holES_{\ads,u},\frakc)).
\eeq
This follows from the effect of the Katz $p$-adic differential operator on the $q$-expansion of $p$-adic Hilbert modular forms (cf. \cite[cor. 2.6.25]{Ka}). 
In what follows, we drop $\iota_p$ from the notation.\\
\noindent \\
\subsection{Cyclotomic derivative}
In this subsection, we obtain an expression for the power series associated to the cyclotomic derivative of Katz $p$-adic L-function
in terms of the $t$-expansion of certain $p$-adic Hilbert modular forms around a well chosen CM point (cf. (1.9)).\\
\\
Let $Cl_-:=\cK^\x\AFf^\x\bksl \AKf/U_\cK$ and let $Cl_-^{alg}$ be the subgroup of $Cl_-$ generated by ramified primes. 
Let $\Gamma'$ be the open subgroup of $\Gamma^-$ generated by the image of $O_{p}^\times \x\prod_{v|D_{\cK/\cF}}\cK_v^\x$ via $\rec_\cK$. 
The reciprocity law $\rec_\cK$ at $\Sg_p$ induces an injective map $\rec_{\Sg_p}\colon 1+pO_{p}\hookto O_{p}^\x=\oplus_{w\in\Sg_p}\cO_{\cK_w}^\x\stackrel{\rec_\cK}\longto Z(\frakC)^-$ 
with finite cokernel as $p\ndivides D_{\cF}$, and it is easy to see that $\rec_{\Sg_p}$ induces an isomorphism $\rec_{\Sg_p}:1+pO_p\isoto\Gamma'$. 
We thus identify $\Gamma'$ with the subgroup $\rec_{\Sg_p}(1+pO_p)$ of $Z(\frakC)^-$. Let $Z':=\pi_-^{-1}(\Gamma')$ be the subgroup of 
$Z(\frakC)$ and let $Cl'_-\supset Cl^{alg}_-$ be the image of $Z'$ in $Cl_-$ and let $\cD'_1$ (resp. $\cD_1''$) be a set of representatives of $Cl'_-/Cl^{alg}_-$ (resp. $Cl_-/Cl'_-$) in 
$(\AKf^{(\frakD)})^\x$. Let $\cD_1:=\cD_1''\cD'_1$ be a set of representatives of $Cl_-/Cl^{alg}_-$. Let $\cU_p$ be the torsion subgroup of $(\cO_{\cF}\ot_\Z\Zp)^\x$ and 
let $\cU^{alg}:=U_\cK\cap (\cK^\x)^{1-c}$. Let $\cD_0$ be a set of representatives of  $\cU_p/\cU^{alg}$ in $\cU_p$.\\
\\
Suppose that $\lam$ is a self-dual Hecke character of infinity type $k\Sigma$ with the root number $-1$, where $k>0$.\\
\\
Let $\mathcal{E}_{\lam',u}$ (resp. $\mathcal{E}_{\lam',u,a}$) be the $p$-adic derivative of $\mathcal{E}_{\widehat{\lam N^{p^n}},u}$ (resp. $\mathcal{E}_{\widehat{\lam N^{p^n}},u,a}$)
 in the space of $p$-adic Hilbert modular forms i.e. 
\beq\label{E}\mathcal{E}_{\lam',u} = \lim_{n \rightarrow \infty} \frac { \mathcal{E}_{\widehat{\lam N^{p^{n}}},u} - \mathcal{E}_{\widehat{\lam},u}} {p^{n}}\eeq
and $\mathcal{E}_{\lam',u,a}$ defined analogously. From the formula for Fourier coefficients in Proposition 2.1 and (2.2), 
we first note that the corresponding $q$-expansion limit exists. The existence of $\mathcal{E}_{\lam',u}$ and $\mathcal{E}_{\lam',u,a}$
in the space of $p$-adic Hilbert modular forms then follows from the fact that $p$-adic limit of $p$-adic Hilbert modular forms is again a $p$-adic Hilbert modular form (cf. \cite[\S1.9]{Ka}).\\
\\
Say the $q$-expansion of $\mathcal{E}_{\lam',u}$ at the cusp $(O,\frakc^{-1})$ is given by
$\mathcal{E}_{\lam',u}|_{(\OF,\frakc^{-1})}(q)=\sum_{\beta\in \cF_+ }\bfa_\beta(\mathcal{E}_{\lam',u},\frakc)\cdot q^\beta$.
Let $\bfa_{\beta,v}'(\bfc_v,\lam)$ and $A_{\beta,v}'(\lam)$ be the $p$-adic derivatives of $\bfa_{\beta,v}(\bfc_v,\lam N^{p^{n}})$ and  $A_{\beta,v}(\lam N^{p^{n}})$, respectively.\\
\\
The following theorem gives a formula for $\mu(L_{\Sigma,\lam}^{'})$ in terms of the $p$-adic valuation of the Fourier coefficients $\bfa_\beta(\mathcal{E}_{\lam',u},\frakc)$.\\
\begin{thm} Suppose that $p\ndivide h_\cK^-\cdot D_\cF$. Then, we have
\[\mu(L_{\Sigma,\lam}^{'})=\inf_{(u,a)\in \cD_0\x\cD_1, \beta \in \cF_+}v_p( \frac{\bfa_\beta(\mathcal{E}_{\lam',u},\frakc(a))}{\log_{p}(1+p)}).\]
\end{thm}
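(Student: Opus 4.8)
The plan is to run Hida's three-step reduction through the $p$-adic cyclotomic derivative: differentiate the decomposition (1.8) to obtain (1.9), use linear independence modulo $p$ to turn the $\mu$-invariant of the resulting sum into a minimum over the summands, and then read each summand off its $q$-expansion by the $q$-expansion principle.

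First I would establish the decomposition (1.9). For every $n\geq 0$ the twist $\lam N^{p^n}$ is covered by Hsieh's identity (1.8), giving $G_{\Sigma,\lam N^{p^n}}=\sum_{(u,a)\in\cD_0\x\cD_1}a_{u,a}\circ\mathcal{E}_{\widehat{\lam N^{p^n}},u,a}(t)$, where $a_{u,a}$ denotes the automorphism of the deformation space of the chosen CM point attached to the pair $(u,a)$. As $\cD_0\x\cD_1$ is finite and each $a_{u,a}$ is continuous, the finite sum commutes with the $p$-adic limit defining the derivative in (2.3); combined with the normalisation (1.7) this yields $G_{\Sigma,\lam'}=\frac{1}{\log_p(1+p)}\sum_{(u,a)}a_{u,a}\circ\mathcal{E}_{\lam',u,a}(t)$. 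Existence of the limit in the space of $p$-adic Hilbert modular forms is already guaranteed by the Fourier-coefficient formula of Proposition 2.1 together with (2.2) and the closedness of that space under $p$-adic limits. Since the $\mu$-invariant of a measure on $1+pO_p$ agrees with that of its binomial power-series expansion, $\mu(L_{\Sigma,\lam}')=\mu(G_{\Sigma,\lam'})$.

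Next I would invoke Hida's linear independence modulo $p$. The operators $a_{u,a}$ are indexed by the distinct classes of $\cU_p/\cU^{alg}$ and $Cl_-/Cl^{alg}_-$, and under the hypothesis $p\ndivide h_\cK^-\cdot D_\cF$ the map $\rec_{\Sg_p}$ identifies $1+pO_p$ with $\Gamma'$ while the relevant Hecke orbit of the CM point is large enough, by Chai's theory of Hecke-stable subvarieties, to force the normalised reductions of the $t$-expansions $(a_{u,a}\circ\mathcal{E}_{\lam',u,a}(t))_{(u,a)}$ to be linearly independent over $\Fpbar$. Because the derivative (2.3) is a $\Zbarp$-linear operation commuting with the $a_{u,a}$ and leaving the CM-point structure intact, Hida's argument transfers verbatim to the derivative forms; the absence of cancellation among leading terms then gives $\mu\big(\sum_{(u,a)}a_{u,a}\circ\mathcal{E}_{\lam',u,a}(t)\big)=\min_{(u,a)}\mu(a_{u,a}\circ\mathcal{E}_{\lam',u,a}(t))$.

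Finally, each $a_{u,a}$ is an automorphism of the deformation space and hence preserves the $\mu$-invariant, so $\mu(a_{u,a}\circ\mathcal{E}_{\lam',u,a}(t))=\mu(\mathcal{E}_{\lam',u,a}(t))$; by the $q$-expansion principle this intrinsic invariant equals the $\mu$-invariant of the $q$-expansion at the cusp $(O,\frakc(a)^{-1})$, namely $\inf_{\beta\in\cF_+}v_p(\bfa_\beta(\mathcal{E}_{\lam',u},\frakc(a)))$. Assembling the three steps and absorbing the constant $\frac{1}{\log_p(1+p)}$, which shifts every valuation by $-v_p(\log_p(1+p))$, produces exactly $\mu(L_{\Sigma,\lam}')=\inf_{(u,a),\beta}v_p\big(\bfa_\beta(\mathcal{E}_{\lam',u},\frakc(a))/\log_p(1+p)\big)$. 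The main obstacle is the first step: one must ensure that the global decomposition (1.9)—not merely its restriction to a finite open cover, as used in \cite{Hi} and \cite{Hs1}—survives differentiation, which is precisely why the class-number hypothesis $p\ndivide h_\cK^-$ is needed, since a cover-wise identity does not commute with the $p$-adic limit; one must also check that the normalisation underlying Hida's linear-independence argument is compatible with the term-by-term limit defining $\mathcal{E}_{\lam',u,a}$, so that passing to the derivative introduces no new cancellation.
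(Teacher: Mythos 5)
Your proposal is correct and follows essentially the same route as the paper: differentiate Hsieh's identity $G_{\Sigma,\lam N^{p^n}}=\sum_{(u,a)}a_{u,a}\circ\mathcal{E}_{\widehat{\lam N^{p^n}},u,a}(t)$ termwise through the $p$-adic limit (justified, as you note and as the paper notes, by $p\ndivide h_\cK^-$ making the global rather than cover-wise expression available, and by closedness of $p$-adic modular forms under limits), then apply Hida's linear independence of mod $p$ modular forms and the $q$-expansion principle to reduce $\mu$ to the infimum of valuations of Fourier coefficients. The only differences are bookkeeping: the paper writes the sum over $\cD_0\times\cD_1'$ with the explicit factor $\sharp(\mathcal{U}^{alg})$, unit weights $\chi(a)$, and Hecke translates $|[a]\,(t^{\langle a\rangle_\Sigma u^{-1}})$, invoking \cite[Lemma 5.3]{Hs1} to pass to the full index set $\cD_0\times\cD_1$ in the final formula, which your abstract operators $a_{u,a}$ subsume.
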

\begin{proof}
Let $\bfx$ be the CM point with CM type $(\cK,\Sg)$ and the polarization ideal $\frakc(\cO_\cK)$ defined in 
\cite[\S 5.2]{Hs1}. Let $t$ denote the Serre-Tate co-ordinates of the deformation space of $\bfx$. 
For $a\in \cD_1'$, let $\langle a \rangle_{\Sigma}$ be the unique element in $1+pO_p$ such that 
$\rec_{\Sg_p}(\langle a \rangle_{\Sigma})=\pi_-(\rec_\cK(a))\in\Gamma'$. We define a formal $t$-expansion $\cE_{\widehat{\lam N^k}}(t)$ by 
\[\cE_{\widehat{\lam N^k}}(t):=\sharp(\mathcal{U}^{alg})\cdot\sum_{(u,a)\in \cD_0\x \cD_1' }\ads(a)\mathcal{E}_{\widehat{\lam N^k}, u,a}|[a] (t^{\langle a \rangle_{\Sigma}u^{-1}}),\]
where $|[a]$ is the Hecke action induced by $a$ (cf. \cite[Remark 4.5]{Hs1}).\\
\\
By an argument identical to the proof of \cite[Prop. 5.2]{Hs1}, it follows that the formal $t$-expansion $\cE_{\widehat{\lam N^k}}(t)$ equals the power series expansion of the measure 
$L_{\Sigma,\lam N^k}^{-}$ regarded as a $p$-adic measure on $O_p$ with support in $1+pO_p$ i.e. 
$G_{\Sigma,\lam N^k}=\cE_{\widehat{\lam N^k}}(t)$, where $G_{\Sigma,\lam N^k}$ is as in (1.7).\\
\\
In view of the introduction, it now follows that the formal $t$-expansion  $\cE_{\lam'}(t)$ defined by
\[\cE_{\lam'}(t):=\frac{\sharp(\mathcal{U}^{alg})}{\log_p(1+p)}\cdot\sum_{(u,a)\in \cD_0\x \cD_1' }\ads(a)\mathcal{E}_{\lam', u,a}|[a] (t^{\langle a \rangle_{\Sigma}u^{-1}}),\]
equals the power series expansion of $L_{\Sigma,\lam}'$. We thus conclude that
$$\mu(L_{\Sigma,\lam}')=\inf\stt{r\in\Q_{\geq 0}\mid p^{-r}\cE_{\lam'}(t)\not\con 0\pmod{\frakm_{\Zbarp}}}.$$
\\
Here $\frakm_{\Zbarp}$ is the maximal idea of the local ring $\Zbarp$. 
Note that $ p \ndivide \sharp(\mathcal{U}^{alg})$. From \cite[Lemma 5.3]{Hs1}, the linear independence of mod $p$ Hilbert modular forms (cf. \cite[Thm. 3.20]{Hi}), 
and the $q$-expansion principle of \padic modular forms, it follows that
\[\mu(L_{\Sigma,\lam}^{'})=\inf_{(u,a)\in \cD_0\x\cD_1, \beta \in \cF_+}v_p( \frac{\bfa_\beta(\mathcal{E}_{\lam',u},\frakc(a))}{\log_{p}(1+p)}).\]
\end{proof}
\noindent\\
\section{Proof of Theorem A}
\noindent In this section, we prove Theorem A. 
In \S 3.1,  we firstly prove the vanishing of the toric Eisenstein series associated to a self-dual Hecke character of root number $-1$ in \S2.1. In \S3.2-3.4, we prove the Theorem.\\
\\
\subsection{Vanishing of toric Eisenstein series}
In this subsection,  we prove the vanishing of the toric Eisenstein series associated to a self-dual Hecke character of root number $-1$ in \S2.1.\\
\\
For $\delta$ as in \cite[$(d1)$ and $(d2)$]{Hi}, let $\xi=2\delta$ . 
We recall the formula for the local root number of a self-dual Hecke character.\\
\begin{lm}
Let $\lam$ be a self-dual Hecke character, $v$ a place of $\cF$ and  $W(\lam_{v}^{*})$ the corresponding local root number. Then,
$$ W(\lam_{v}^{*})=\pm\lam_{v}^{*}(\xi).$$
Moreover,\\
(1). If  $v=\sigma \in \Sigma$, then $W(\lambda_{\sigma}^{*}) = \lambda_{\sigma}^{*}(\xi)$,\\
(2). If $v$ is split, then $W(\lam_{v}^{*})=\lam_{v}^{*}(\xi)$ and\\
(3). If $v$ is non-split finite, then $W(\lam_{v}^{*})=(-1)^{a(\lam_{v}^{*})+v(\mathfrak{c}(\cO_\cK))}\lam_{v}^{*}(\xi)$,
where $\mathfrak{c}(\cO_\cK)=\cD_{\cF}^{-1}(\xi \cD_{\cK/\cF}^{-1})$ and $a(\lam_{v}^{*})=\inf\big{\{}n \in \Z_{\geq0}|\lam_{v}^{*}|_{1+\varpi_{v}^{n}O_{v}}=1 \big{\}}$ (cf. \cite[Prop. 3.7]{MS} and \cite{Ta}).\\
\end{lm}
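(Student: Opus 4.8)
The plan is to derive the formula from Tate's local functional equation \cite{Ta} together with the explicit abelian $\ep$-factor, specialized to the conjugate-self-dual character $\lam_v^*$. First I would extract two structural consequences of self-duality. Since $\lam_v^* = \lam_v\Abs_{\AK}^{-\onehalf}$ and $\Abs_{\AK}$ restricts to $\Abs_{\A_\cF}^2$ on $\A_\cF^\times$, the self-dual relation $\lam|_{\A_\cF^\times}=\tau_{\cK/\cF}\Abs_{\A_\cF}$ gives $\lam^*|_{\A_\cF^\times}=\tau_{\cK/\cF}$; comparing infinity types $k\Sg$ and $k\Sg c$ also yields $\lam^*\lam^{*c}=1$. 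Hence each local component $\lam_v^*$ is a \emph{unitary, conjugate-self-dual} character of $\cK_v^\times$, with $\lam_v^{*c}=\overline{\lam_v^*}$ and $\lam_v^*|_{\cF_v^\times}=\tau_{\cK_v/\cF_v}$.

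Next I would write $W(\lam_v^*)=\ep(\tfrac12,\lam_v^*,\psi_{\cK,v})$ for the central value of the local $\ep$-factor attached to the additive character $\psi_{\cK,v}=\psi_v\circ\Tr_{\cK_v/\cF_v}$, and invoke the standard Gauss-sum expression: choosing a generator $\gamma$ of the product of the conductor of $\lam_v^*$ with the conductor of $\psi_{\cK,v}$, one has $W(\lam_v^*)=\lam_v^*(\gamma)\cdot g_v$, where $g_v$ is a unit-modulus normalized Gauss sum. The conductor of $\psi_{\cK,v}$ is governed by $\cD_{\cK/\cF}\cD_\cF$, and by the choice of $\delta$ in \cite[$(d1)$, $(d2)$]{Hi} the element $\xi=2\delta$ generates exactly the relevant fractional ideal; so $\gamma$ may be taken to be $\xi$ up to a unit on which $\lam_v^*$ is trivial, producing the factor $\lam_v^*(\xi)$. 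The conjugate-self-dual relation $\lam_v^{*c}=\overline{\lam_v^*}$ together with the $c$-invariance of $\psi_{\cK,v}$ (as $\Tr_{\cK_v/\cF_v}$ is $c$-invariant) forces $g_v\overline{g_v}=1$ and $g_v=\pm1$, which is the asserted $W(\lam_v^*)=\pm\lam_v^*(\xi)$.

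It then remains to pin down the sign case by case, which I would carry out following \cite[Prop.~3.7]{MS}. For $v=\sigma\in\Sigma$, I would compute the archimedean $\ep$-factor of the character $z\mapsto(z/\abs{z})^{k}$ of $\C^\times=\cK_\sigma^\times$ directly; using that $\sigma(\delta)$ is purely imaginary with sign fixed by $\Sigma$, the archimedean Gauss integral evaluates to $+\lam_\sigma^*(\xi)$. For $v$ split, $\cK_v\cong\cF_v\times\cF_v$ and $\lam_v^*=(\mu,\mu^{-1})$, so $W(\lam_v^*)=\ep(\mu,\psi_v)\ep(\mu^{-1},\psi_v)$; the two factors are complex conjugates, so the product of their Gauss-sum signs is $+1$ and the $\lam_v^*(\xi)$-parts combine, giving $W(\lam_v^*)=+\lam_v^*(\xi)$. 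The remaining, and genuinely delicate, case is $v$ non-split finite, where $g_v$ is a quadratic Gauss sum attached to $\tau_{\cK_v/\cF_v}=\lam_v^*|_{\cF_v^\times}$. I expect this to be the main obstacle: it requires computing the conductor exponent of $\psi_{\cK,v}$ in terms of $v(\cD_{\cK/\cF}\cD_\cF)$, comparing it against the conductor exponent $a(\lam_v^*)$, and matching the resulting parity to $v(\mathfrak{c}(\cO_\cK))$ through $\mathfrak{c}(\cO_\cK)=\cD_\cF^{-1}(\xi\cD_{\cK/\cF}^{-1})$. Carrying out this local quadratic Gauss-sum computation exactly as in \cite[Prop.~3.7]{MS} yields the sign $(-1)^{a(\lam_v^*)+v(\mathfrak{c}(\cO_\cK))}$.
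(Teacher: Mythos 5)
The paper states this lemma without proof, simply citing \cite[Prop.~3.7]{MS} and \cite{Ta}, and your proposal reconstructs exactly that route: Tate's local $\epsilon$-factor formalism for the general $\pm\lam_v^*(\xi)$ statement and the archimedean/split cases, with the delicate non-split sign $(-1)^{a(\lam_v^*)+v(\mathfrak{c}(\cO_\cK))}$ delegated to the quadratic Gauss-sum computation of Murase--Sugano, precisely as the paper intends. Your outline is correct and in substance identical to the paper's (cited) proof, so there is nothing further to compare.
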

\noindent Now suppose that $\lam$ is a self-dual Hecke character of infinity type $k\Sigma$ with the root number $-1$, where $k>0$.\\
\begin{prop} The toric Eisenstein series $\mathcal{E}_{\lam,u}$ vanishes.
 Moreover, for a given $\beta \in \cF_+$ coprime-to-$\mathfrak{F}$ such that $\beta_v \in u_{v}(1+\varpi_{v}O_v)$ for all $v|p$,
 there exists a non-split place $v_1$ such that
 $\bfa_{\beta,v_1}(\bfc_{v_1},\lam)=0$.
\end{prop}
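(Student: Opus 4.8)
The plan is to derive both assertions from the Fourier-coefficient formula of Proposition 2.1, with the ``moreover'' as the engine driving the vanishing. First I would note that by the $q$-expansion principle it suffices, for the vanishing of $\holES_{\lam,u}$ (hence of its $p$-adic avatar $\mathcal{E}_{\lam,u}$), to check that every $\bfa_\beta(\holES_{\lam,u},\frakc)$ with $\beta\in\cF_+$ is zero. For $\beta$ not coprime to $\frakF$, or with $\beta_v\notin u_v(1+\varpi_v O_v)$ for some $v\mid p$, the indicator factors $\bbI_{O_v^\x}$ and $\bbI_{u_v(1+\varpi_v O_v)}$ in that formula already give $0$; for the remaining ``good'' $\beta$ the factors $\beta^{(k-1)\Sg}$ and the $p$- and $\frakF$-factors are nonzero. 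Thus everything reduces to exhibiting, for each good $\beta$, one non-split place $v_1$ at which $\bfa_{\beta,v_1}(\bfc_{v_1},\lam)=0$, which is exactly the ``moreover'' and which then kills the whole coefficient.

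Next I would locate the sign through the root number. Write $\lam^*=\lam\Abs_{\AK}^{-\onehalf}$, the character out of which the local factors are built. By Lemma 3.1 the local root number is $W(\lam_v^*)=\epsilon_v\,\lam_v^*(\xi)$ with $\epsilon_v=+1$ at the archimedean and split $v$ and $\epsilon_v=(-1)^{a(\lam_v^*)+v(\mathfrak{c}(\cO_\cK))}$ at the non-split finite $v$. Since $\xi=2\delta\in\cF^\x\subset\cK^\x$ and $\lam^*$ is trivial on $\cK^\x$, the product formula gives $\prod_v\lam_v^*(\xi)=1$, so that
\[
\prod_{v\,\text{non-split, finite}}\epsilon_v=W(\lam^*)=-1 .
\]
Hence an odd number of non-split finite places carry $\epsilon_v=-1$.

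I would then identify the vanishing factor at an inert place. At a non-split place self-duality forces $\lam^*(\varpi_v)=-1$: here $\varpi_v$ is a uniformiser of the inert field $\cK_v$, $\lam$ restricts to $\tau_{\cK/\cF}|\cdot|$ on $\cF_v^\x$ with $\tau_{\cK/\cF,v}(\varpi_v)=-1$, and $\Abs_{\AK}^{-\onehalf}(\varpi_v)=|\varpi_v|_v^{-1}$ cancels the norm factor. Consequently, at an inert \emph{unramified} place (so $v\nmid\frakD$ and $a(\lam_v^*)=0$),
\[
\bfa_{\beta,v}(\bfc_v,\lam)=\sum_{i=0}^{v(\bfc_v)}\lam^*(\varpi_v^i)=\sum_{i=0}^{v(\bfc_v)}(-1)^i ,
\]
which vanishes exactly when $v(\bfc_v)$ is odd. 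As the relevant $\frakc$ is a polarisation ideal $\mathfrak{c}(a)=\mathfrak{c}(\cO_\cK)N_{\cK/\cF}(\mathfrak{a})$ and $N_{\cK/\cF}$ has even valuation at inert places, the parity of $v(\bfc_v)$ agrees with that of $v(\mathfrak{c}(\cO_\cK))$, i.e. with the condition $\epsilon_v=-1$. So any inert unramified place with $\epsilon_v=-1$ serves as $v_1$, and this $v_1$ is independent of the good $\beta$.

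The hard part will be to ensure that the global sign $-1$ is carried by an inert \emph{unramified} place, so that the vanishing sits in an $\bfa$-factor rather than in a ramified $A_{\beta,v}$-factor. Equivalently, one must show $\prod_{v\mid\frakC^-D_{\cK/\cF}}\epsilon_v=+1$; by the displayed identity this then forces the inert-unramified contribution to be $-1$ and produces the required $v_1$. Establishing this seems to require Hsieh's explicit evaluation of the ramified local Whittaker integrals $A_{\beta,v}(\lam)$ in \cite{Hs1} and \cite{Hs2}, together with the precise form $\mathfrak{c}(\cO_\cK)=\cD_\cF^{-1}(\xi\cD_{\cK/\cF}^{-1})$ and the choice $\xi=2\delta$, to see that those ramified factors stay nonzero for good $\beta$ (whence $\epsilon_v=+1$ there). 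Granting this, the parity bookkeeping of the previous two paragraphs completes both the vanishing of $\holES_{\lam,u}$ and the local statement.
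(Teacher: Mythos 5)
Your reduction to the local factors and your product--formula computation $\prod_{v\ \mathrm{non\text{-}split\ finite}}\epsilon_v=-1$ are sound and agree with the paper, but the pivot of your argument --- a single non-split place $v_1$, \emph{independent of} $\beta$, at which $\bfa_{\beta,v_1}(\bfc_{v_1},\lam)=0$ --- is wrong, and the error traces back to taking the display in Proposition 2.1 at face value. The unramified local Whittaker sum depends on $\beta$: it is $\bfa_{\beta,v}(\bfc_v,\lam)=\sum_{i=0}^{v(\beta\bfc_v)}\lam^{*}(\varpi_v)^{i}$ (the displayed formula drops the $\beta$ by a typo; the paper's own proof of this proposition argues that ``the valuation $v(\bfc_v\beta)$ is odd'', and Lemma 3.6 and Proposition 3.8 later choose $\eta_{\frakq_1}$ with prescribed parity of $v_{\frakq_1}(\eta_{\frakq_1}\bfc_{\frakq_1}^{0})$, which is meaningless without the $\beta$-dependence). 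Since $\tau_{\cK_v/\cF_v}(\beta)=(-1)^{v(\beta)}$ at an inert $v$, a good $\beta$ with odd valuation at your fixed $v_1$ flips the parity of $v_1(\beta\bfc_{v_1})$ and makes your chosen factor nonzero: the place carrying the sign defect moves with $\beta$. The paper therefore argues one $\beta$ at a time: combining $W(\lam^{*})=-1$ with the product formulas $\prod_v\tau_{\cK_v/\cF_v}(\beta)=1$ and $\prod_v\lam_v^{*}(\xi)=1$ yields, for each fixed $\beta\in\cF_+$, some finite non-split $v$ (depending on $\beta$) with $W(\lam_v^{*})\tau_{\cK_v/\cF_v}(\beta)=-\lam_v^{*}(\xi)$, total positivity of $\beta$ and Lemma 3.1 excluding the archimedean and split places. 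Once the $\beta$ is restored, your parity computation at inert $v\nmid\cond(\lam)$ is exactly this defect condition, i.e.\ the paper's Case II.

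Your ``hard part'' --- proving $\prod_{v\mid\frakC^{-}D_{\cK/\cF}}\epsilon_v=+1$ so that the sign is always carried by an inert place unramified for $\lam$ --- is not a gap to be filled but a false statement in general. The global sign $-1$ can sit entirely at places dividing $\frakC^{-}D_{\cK/\cF}$: for instance for $\cF=\Q$, $\cK=\Q(i)$, $\delta=i$, the ideal $\frakc(\cO_\cK)=(2i)\cD_{\cK/\cF}^{-1}$ is trivial, so $\epsilon_v=+1$ at every inert $v\nmid\cond(\lam)$, while self-dual $\lam$ with root number $-1$ abound; your scheme then produces no vanishing factor at all. What actually happens in that situation is precisely what you tried to rule out: when the defect place satisfies $v\mid\cond(\lam)$, Hsieh's \cite[Lem.~6.1]{Hs1} shows that the ramified factor $A_{\beta,v}(\lam)$ vanishes --- this is Case I of the paper's proof, and the ``moreover'' must be read accordingly (cf.\ Corollary 3.3: the vanishing factor is one of $\bfa_{\beta,v}(\bfc_v,\lam)$ \emph{or} $A_{\beta,v}(\lam)$), rather than forced into an $\bfa$-factor as you attempt. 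Indeed the sequel (Lemma 3.9, Proposition 3.10) constructs $\beta$ for which it is $A_{\beta,v}(\lam)$ that vanishes, so any argument showing the ramified factors are always nonzero would contradict the rest of the paper.
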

\begin{proof}
 Let $\beta \in \cF_+$. The global root number $W(\lam^{*})$ is given by  $W(\lam^{*})=\prod_{v} W(\lam_{v}^{*}) = -1$. 
Note that $ W(\lam^{*})\tau_{\cK/\cF}(\beta)=(-1)\cdot\lam^{*}(\xi).$
In view of the fact that $\beta \in \cF_+$ and Lemma 3.1, it thus follows that there exists a place $v$ such that\\
$$W(\lam_{v}^{*})\tau_{\cK_v/\cF_v}(\beta)=(-1)\cdot\lam_{v}^{*}(\xi).$$\\
Here $\tau_{\cK_{v}/\cF_{v}}$ denotes the
character associated to the extension $\cK_{v}/\cF_{v}$. In view of Lemma 3.1, we conclude that this $v$ is finite and non-split. Now there are two cases.\\
\\
Case I - $v |\cond(\lam)$. Here $\cond(\lam)$ is the conductor of $\lam$. From the above equation and \cite[Lem. 6.1]{Hs1}, it now follows that $A_{\beta,v}(\lam)=0$.\\
\\
Case II - $v \ndivide\cond(\lam)$. From Lemma 3.1, it follows that this $v$ is inert and the valuation $v(\bfc_{v}\beta)$ is odd.  From the 
formula in Proposition 2.1, we conclude that $\bfa_{\beta,v}(\bfc_{v},\lam)=0$.\\
\\
The vanishing of the toric Eisenstein series $\mathcal{E}_{\lam,u}$ thus follows from the decomposition of the $\beta$-th Fourier coefficient as a product of local Whittaker integrals in Proposition 2.1.\\
\end{proof}
\noindent 
\begin{remark} The vanishing also follows from the linear independence of mod $p$ modular forms (cf. \cite[Thm. 3.20]{Hi}). 
The above more elementary proof is due to the referee.
\end{remark}
\noindent Here is a useful corollary.\\
\begin{cor} Let $\beta \in \cF_{+}$. A necessary condition for the non-vanishing of  $\bfa_{\beta}(\mathcal{E}_{\lambda^{'},u}, \mathfrak{c})$ is that exactly one of the local Whittaker coefficients
 $\bfa_{\beta,v}(\bfc_v,\lam)$ and $A_{\beta,v}(\lam)$ vanish. 
 Moreover, such a $v$ must be non-split. 
 \end{cor}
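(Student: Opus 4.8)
The plan is to combine the product structure of the Fourier coefficient from Proposition 2.1 with the Leibniz rule for the $p$-adic derivative (1.5) and the vanishing established in Proposition 3.2. The starting observation is that, by the construction in §2.2, $\bfa_\beta(\mathcal{E}_{\lam',u},\frakc)$ is the derivative at $s=0$ of $\bfa_\beta(\mathcal{E}_{\widehat{\lam N^s},u},\frakc)$, and the latter is---using that $\lam$ has infinity type $k\Sg$, so $\kappa=0$ and $\theta^\kappa$ is trivial---expressed by Proposition 2.1 and (2.2) as an effectively finite product
\[\bfa_\beta(\mathcal{E}_{\widehat{\lam N^s},u},\frakc)=\prod_j g_j(s),\]
whose factors $g_j$ comprise the nonzero term $\beta^{(k-1)\Sg}$, the character and indicator factors attached to $w\mid\Csplit$ and $w\in\Sg_p$, the local Whittaker integrals $\bfa_{\beta,v}(\bfc_v,\lam N^s)$ for $v\nmid\infty\frakD$, and the factors $A_{\beta,v}(\lam N^s)$ for $v\mid\frakC^-D_{\cK/\cF}$ (all but finitely many equal to $1$).

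First I would apply the Leibniz rule to obtain $\bfa_\beta(\mathcal{E}_{\lam',u},\frakc)=\sum_j g_j'(0)\prod_{i\neq j}g_i(0)$. Next I would use Proposition 3.2, which gives $\prod_j g_j(0)=\bfa_\beta(\mathcal{E}_{\widehat{\lam},u},\frakc)=0$, so at least one factor vanishes at $s=0$. I would then argue that at most one factor can vanish: if two distinct factors $g_{j_0},g_{j_1}$ vanished at $s=0$, then for each index $j$ the product $\prod_{i\neq j}g_i(0)$ omits only the index $j$ and hence retains one of $g_{j_0}(0),g_{j_1}(0)$, so every Leibniz term is $0$ and $\bfa_\beta(\mathcal{E}_{\lam',u},\frakc)=0$. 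Contrapositively, non-vanishing of $\bfa_\beta(\mathcal{E}_{\lam',u},\frakc)$ forces exactly one vanishing factor. To see that this factor is one of the local Whittaker coefficients and is non-split, I would note that the indicator factors $\bbI$ do not depend on $s$ (so a vanishing indicator has zero derivative and can only force the whole coefficient to vanish) while the character values $\chi_w(\beta)$ are units; thus the unique vanishing factor is some $\bfa_{\beta,v}(\bfc_v,\lam)$ or $A_{\beta,v}(\lam)$, and since the proof of Proposition 3.2 locates a vanishing local Whittaker coefficient at a non-split place, that place is the one in question.

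The main obstacle is the justification that the discrete $p$-adic derivative (1.5) genuinely satisfies $\bfa_\beta(\mathcal{E}_{\lam',u},\frakc)=\sum_j g_j'(0)\prod_{i\neq j}g_i(0)$, i.e. that each $g_j'(0)$ exists and the Leibniz expansion is valid $p$-adically. This rests on the existence of the limit defining $\mathcal{E}_{\lam',u}$, established in §2.2 from Proposition 2.1 and (2.2), together with the validity of the Leibniz product rule for the derivative (1.5) recorded in the introduction; the finiteness of the effective product makes the expansion a genuine finite sum, so no convergence issue beyond differentiability of the individual factors arises.
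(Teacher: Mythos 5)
Your proposal is correct and takes essentially the same route as the paper: the published proof likewise combines the product decomposition of Proposition 2.1 and (2.2) with the Leibniz rule and the vanishing from Proposition 3.2 to force exactly one vanishing local factor, identifies the non-split place via the proof of Proposition 3.2, and disposes of $\beta$ violating the indicator/support conditions by observing that then $\bfa_{\beta}(\mathcal{E}_{\widehat{\lam N^{k}},u},\frakc)=0$ for every $k$, hence the derivative vanishes. The only cosmetic slip is your remark that $\kappa=0$ makes $\theta^{\kappa}$ trivial: along the family $\lam N^{p^{n}}$ the infinity type (hence $\kappa$) varies with $n$, but under the indicator conditions the resulting factor $\beta^{\kappa}$ is a differentiable $p$-adic unit, so your argument is unaffected.
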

\begin{proof} The first part of the corollary follows by Proposition 3.1, the Leibnitz product rule and the Fourier coefficient formulas in Proposition 2.1 and (2.2).\\
\\
If $\beta$ is as in Proposition 3.2, then the second part follows immediately by the lemma. If it not of this
form, then by the formulas in Proposition 2.1 and (2.2)
$$\bfa_{\beta}(\mathcal{E}_{\widehat{\lambda N^{k}},u}, \frakc)=0$$ 
for any $k$. In particular, $\bfa_{\beta}(\mathcal{E}_{\lambda^{'},u}, \mathfrak{c})=0$.\\
\end{proof}
\noindent \\
\subsection{Lower bound} In this subsection, 
we prove the lower bound 
$$\mu(L_{\Sigma,\lam}^{'}) \geq \min_{v|\mathfrak{C^{-}}} \{\mu_{p}'(\lam), \mu_{p,v}'(\lam)\}$$
of the equality asserted in Theorem A.\\
\\
Let $v$ be a local place of $\cF$ and $|\cdot|=|\cdot|_v$ be the corresponding absolute value.\\
\\
Let $\lam$ be as before, $N:=|\cdot|_{\A_{\cK}^\times}$ be the norm Hecke character and $\lam^{*}=\lam \cdot N^{-1}$. From the self-duality, 
we see 
\beq
\label{E}\lambda^{*}|_{\cF_{v}^{\times}}= \tau_{\cK_{v}/\cF_{v}}.
\eeq 
Here $\tau_{\cK_{v}/\cF_{v}}$ denotes the
character associated to the extension $\cK_{v}/\cF_{v}$.\\
\\
Let $\lam'$ be the $p$-adic derivative of $\lam N^{p^k}$.
Recall that $\bfa_{\beta,v}'(\bfc_v,\lam)$ and $A_{\beta,v}'(\lam)$ are the $p$-adic derivatives of $\bfa_{\beta,v}(\bfc_v,\lam N^{p^{n}})$ and  $A_{\beta,v}(\lam N^{p^{n}})$, respectively.\\
\begin{lm} Let $c=1$ if $v$ is non-split and $c=2$, otherwise.\\
(1). If $u \in \cO_{\cF_v}^{\times}$, then $\lambda'(u\varpi_{v}^{n})= cn\tau_{K/F}(u)(-|\varpi_{v}|)^{n}\log_{p}(|\varpi_{v}|)$.\\
(2). $\lambda'^{*}(u\varpi_{v}^{n})= (-1)^{n}cn\tau_{K/F}(u)\log_{p}(|\varpi_{v}|).$ \\     
(3). If $v\ndivide p$ and $\beta \in \cF_{v}^{\times}$, then $\log_{p}(|\varpi_{v}|)$ divides $\bfa_{\beta,v}'(\bfc_v,\lam)$ and $A_{\beta,v}'(\lam)$.
\end{lm}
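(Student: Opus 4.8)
The plan is to reduce all three assertions to the single differentiation identity that underlies the whole paper, and then to feed in the self-duality together with the explicit formulas of Proposition 2.1. The basic fact is that, for a $p$-adic unit $a$, the derivative in the sense introduced in the introduction of $s\mapsto a^s$ at $s=0$ is Iwasawa's logarithm: $\lim_{n\to\infty}(a^{p^n}-1)/p^n=\log_p a$, exactly as in the computation $\lim_n((1+p)^{p^n}-1)/p^n=\log_p(1+p)$ recorded there. Since the twist $N^s$ is defined through the $1$-unit-valued $p$-adic cyclotomic character, one has $\wh N(x)^{p^n}\to1$ so that the limit converges and equals $\log_p(\wh N(x))$, the Iwasawa log being extended by $\log_p(p)=0$. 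Writing $\lam N^s=\lam\cdot N^s$ and pulling the (constant in $s$) character value out of the limit gives at once
$\lam'(x)=\wh\lam(x)\log_p(\wh N(x))$ and $\lam'^{*}(x)=\wh{\lam^{*}}(x)\log_p(\wh N(x))$,
where $\wh{(-)}$ denotes the $p$-adic avatar. Thus (1) and (2) follow once I evaluate the two factors at $x=u\varpi_v^n$.

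For (1) and (2) I would compute the character value and the norm logarithm separately. As $v\ndivide p$ the avatar is just $\iota_p$ of the complex value, so the self-duality $\lam^{*}|_{\cF_v^\times}=\tau_{\cK_v/\cF_v}$ gives $\lam^{*}(u\varpi_v^n)=\tau_{\cK_v/\cF_v}(u)\,\tau_{\cK_v/\cF_v}(\varpi_v)^n$ and $\lam(u\varpi_v^n)=\lam^{*}(u\varpi_v^n)\,\wh N(u\varpi_v^n)$; the sign $\tau_{\cK_v/\cF_v}(\varpi_v)^n$ is the source of the $(-1)^n$ in (2) and of the $(-\abs{\varpi_v})^n$ in (1). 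The remaining factor $\log_p(\wh N(u\varpi_v^n))$ is a multiple of $\log_p\abs{\varpi_v}$, because $\wh N$ is the norm down from $\cK$ and its value on $u\varpi_v^n$ is a power of $\abs{\varpi_v}$ whose exponent reflects the local behaviour of the norm character at $v$; this is where the constant $c$ ($=1$ at a non-split place, $=2$ at a split place) enters. Assembling the two factors yields the asserted closed forms. The points needing care here are the sign normalisation of $\tau_{\cK_v/\cF_v}$ at a ramified place and the precise way $\cF_v^\times$ sits inside $\cK_v^\times\simeq\cF_v^\times\x\cF_v^\times$ at a split place, which is exactly what forces $c=2$ there.

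For (3) I would use the product structure of Proposition 2.1 and the Leibnitz rule. Since $\lam$ enters $\bfa_{\beta,v}(\bfc_v,\lam)=\sum_{i=0}^{v(\bfc_v)}\lam^{*}(\varpi_v^i)$ and $A_{\beta,v}(\lam)=L(0,\lam_v)\widetilde A_\beta(\lam_v)$ only through the twist $N^s$, differentiating at $s=0$ produces in each term a factor $\log_p(\wh N(z))$ with $z\in\cK_v^\times$. The key observation is that for $v\ndivide p$ one has $\wh N(z)=\iota_p(\abs{z}_E)\in\iota_p(q_v^{\Z})$, whence $\log_p(\wh N(z))\in\Z\cdot\log_p\abs{\varpi_v}$; so every such factor, and hence the whole derivative, is divisible by $\log_p\abs{\varpi_v}$. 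For the finite sum $\bfa_{\beta,v}'$ this is immediate. I expect the hard part to be $A_{\beta,v}'$, where one must differentiate both the local $L$-factor and the Whittaker integral $\widetilde A_\beta$, the latter being defined in Proposition 2.1 only as a limit of integrals over $\varpi_v^{-n}\cO_{\cF_v}$; the obstacle is to justify that differentiation commutes with this limit and with the integral. I would do this by noting that, away from its single zero $x_v=-\theta_v$, the integrand $\lam_v^{-1}\Abs_E^{s}(x_v+\theta_v)$ is locally constant with values in $q_v^{\Z}$, so its $s$-derivative is again locally constant and divisible by $\log_p\abs{\varpi_v}$, and this divisibility passes to the limit uniformly in $\beta\in\cF_v^\times$.
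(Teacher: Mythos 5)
Your proofs of (1) and (2) are essentially the paper's: the paper likewise observes that by the Leibnitz rule $\lam'(x)=\lam(x)\cdot\frac{d}{dk}N^k(x)|_{k=0}=\lam(x)\log_p(N(x))$, evaluates $\lam(u\varpi_v^n)=\tau_{\cK/\cF}(u)(-|\varpi_v|)^n$ from the self-duality $\lam|_{\A_\cF^\times}=\tau_{\cK/\cF}|\cdot|_{\A_\cF}$, and gets (2) from (1) by dividing by the norm factor. One bookkeeping caution in your write-up: you produce the modulus in (1) from the factorization $\lam=\lam^{*}\cdot\wh N$, but $N=|\cdot|_{\A_\cK}$ contributes exponent $cn$ on $\cF_v^\times$, so at a split place ($c=2$) this yields $|\varpi_v|^{2n}$ rather than the $|\varpi_v|^n$ asserted in (1). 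The paper avoids this clash by computing the \emph{value} $\lam(u\varpi_v^n)$ directly from the self-duality relation with the $\cF$-adelic norm (exponent $n$), and using the $\cK$-norm $N$ (exponent $cn$) only inside the logarithm produced by differentiation. You flagged exactly this split-place subtlety yourself; since the lemma is only ever applied at non-split places (those dividing $\frakC^{-}D_{\cK/\cF}$ and the auxiliary inert prime $\mathfrak{q}_1$), the wobble is inconsequential downstream, but your derivation as stated does not produce (1) at split $v$.

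For (3) your route genuinely differs from the paper's: the paper disposes of it in one sentence by invoking the closed-form evaluations of the local Whittaker integrals in \cite[\S4.3]{Hs2} --- finite expressions in values $\lam^{*}(\varpi_v^j)$ and local $\epsilon$-factors, each of whose $k$-derivatives visibly carries $\log_p(|\varpi_v|)$ by (1)--(2) --- whereas you differentiate the defining limit of integrals in Proposition 2.1 directly. Your version is more self-contained, but two of its supporting claims need repair. First, the integrand is \emph{not} locally constant with values in $q_v^{\Z}$: the values of $\lam_v^{-1}(x_v+\bftheta_v)$ and of $\psi$ are not powers of $q_v$. What is true, and suffices, is that the entire $s$-dependence sits in the single factor $\Abs_E^s(x_v+\bftheta_v)\in q_v^{s\Z}$, whose derivative at $s=0$ lies in $\Z\cdot\log_p(|\varpi_v|)$; one then needs the $s$-independent factor to be $p$-integral, which holds because self-duality forces the values of $\lam_v^{*}$ to be roots of unity (and is in any case visible in Hsieh's evaluated formulas, including the $L(0,\lam_v)$ factor you rightly remember to differentiate as well). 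Second, no uniformity in $\beta$ is needed: for each fixed $\beta\in\cF_v^\times$ the limit over $\varpi_v^{-n}\cO_{\cF_v}$ in Proposition 2.1 stabilizes at a finite level (the tail annuli contribute $0$ by oscillation of $\psi(-d_{\cF_v}^{-1}\beta x_v)$), so differentiation is a finite, termwise matter --- which is in effect why the paper can simply cite the closed formulas. With those two points corrected, your argument for (3) is sound and recovers the paper's conclusion by an alternative, citation-free path.
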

\begin{proof}
From (3.1) it follows that
\beq
\label{E}\lambda(u\varpi_{v}^{n})= \tau_{\cK/\cF}(u\varpi_{v}^{n})|\varpi_{v}|^{n}= \tau_{\cK/\cF}(u) (-|\varpi_{v}|)^{n}.
\eeq
Now $N = |\cdot|_{\A_{\cK}^\times}$. Thus, $N^{k}(u\varpi_{v}^{n})= |\varpi_{v}|^{cnk}$. 
Fixing $u$ and $n$, we can think $N^{k}(u\varpi_{v}^{n})$
as a function of $k$. 
Note that 
\beq
\label{E}\lambda'(u\varpi_{v}^{n})= \tau_{\cK/\cF}(u)(-|\varpi_{v}|^{n})\log_{p}(|\varpi_{v}|^{cn})=
cn\tau_{K/F}(u)(-|\varpi_{v}|)^{n}\log_{p}(|\varpi_{v}|).
\eeq
In particular, $\lambda'(u)=0$. In any case, $\log_{p}(|\varpi_{v}|)$ divides $\lambda'(u\varpi_{v}^{n})$.
Also note that 
\beq
\label{E}\lambda'^{*}(u\varpi_{v}^{n})= |\varpi_{v}|^{-n} \lambda'(u\varpi_{v}^{n})= (-1)^{n}cn\tau_{K/F}(u)\log_{p}(|\varpi_{v}|).
\eeq
It now follows from the formulas in \cite[\S4.3]{Hs2} that $\log_{p}(|\varpi_{v}|)$ 
divides $\bfa_{\beta,v}'(\bfc_v,\lam)$ and $A_{\beta,v}'(\lam)$. 
\\
\end{proof}
\noindent We are now ready to prove the lower bound.\\
\begin{prop}  $$\mu(L_{\Sigma,\lam}^{'}) \geq \min_{v|\mathfrak{C^{-}}} \{\mu_{p}^{'}(\lam), \mu_{p,v}'(\lam)\}.$$
\end{prop}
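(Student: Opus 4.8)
\noindent The plan is to use the formula of \S2.2 to reduce the lower bound to a uniform estimate on the individual normalised Fourier coefficients, and then to exploit the vanishing of $\mathcal{E}_{\lam,u}$ to control them. By the theorem of \S2.2,
\[\mu(L_{\Sigma,\lam}^{'})=\inf_{(u,a)\in \cD_0\x\cD_1,\ \beta\in\cF_+}v_p\Big(\frac{\bfa_\beta(\mathcal{E}_{\lam',u},\frakc(a))}{\log_p(1+p)}\Big),\]
so it suffices to show that for each $(u,a)$ and each $\beta\in\cF_+$ the quantity inside has $p$-adic valuation at least $\min_{v|\frakC^-}\{\mu_p'(\lam),\mu_{p,v}'(\lam)\}$. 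If the coefficient is zero there is nothing to prove, so I assume $\bfa_\beta(\mathcal{E}_{\lam',u},\frakc(a))\neq 0$. By Corollary~3.3 this forces exactly one local factor in the product formula of Proposition~2.1 to vanish at $\lam$, at a single non-split place $v_0$. Since $\mathcal{E}_{\lam,u}$ itself vanishes (\S3.1), the Leibnitz product rule for the $p$-adic derivative (cf.\ the introduction) shows that, upon differentiating the product, every summand that does \emph{not} differentiate the $v_0$-factor still contains the vanishing factor and dies; hence
\[\bfa_\beta(\mathcal{E}_{\lam',u},\frakc(a))=\Big(\prod_{v\neq v_0}(\text{local factor of Proposition~2.1 evaluated at }\lam)\Big)\cdot(\text{$p$-adic derivative of the $v_0$-factor}),\]
the last factor being $A_{\beta,v_0}'(\lam)$ or $\bfa_{\beta,v_0}'(\bfc_{v_0},\lam)$ according to the type of $v_0$.

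Next I would bound the surviving factors from below. After discarding the trivial case in which an indicator vanishes, the term $\beta^{(k-1)\Sg}$, the split and $\Sg_p$ character values, the unramified factors $\bfa_{\beta,v}(\bfc_v,\lam)$ with $v\neq v_0$, and the indicators are all $p$-adic units or lie in $\Zbarp$, so contribute valuation $\geq 0$. The essential ingredient is the non-split estimate
\[v_p\big(A_{\beta,w}(\lam)\big)\ \geq\ \mu_p(\lam_w)\qquad(w\mid \frakC^-,\ A_{\beta,w}(\lam)\neq 0);\]
here I use that the factors $A_{\beta,w}(\lam)$ occur only at non-split $w\mid\frakC^-D_{\cK/\cF}$, all of which divide $\frakC^-$ because $\lam$ is self-dual, and that $\mu_p(\lam_w)$ measures the distance of $\lam_w$ from the trivial character. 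This estimate I would read off from the explicit local Whittaker computations of \cite{Hs1} and \cite{Hs2}.

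Finally I would assemble the bound, splitting according to whether $v_0\mid\cond(\lam)$. If $v_0\mid\cond(\lam)$, then $v_0\mid\frakC^-$ and the differentiated factor is $A_{\beta,v_0}'(\lam)$; since $v_0\ndivide p$, Lemma~3.4(3) gives $v_p(A_{\beta,v_0}'(\lam))\geq v_p(\log_p(|\varpi_{v_0}|))$, while the remaining factors contribute $\sum_{w\neq v_0,\ w\mid\frakC^-}\mu_p(\lam_w)$, so
\[v_p\Big(\frac{\bfa_\beta(\mathcal{E}_{\lam',u},\frakc(a))}{\log_p(1+p)}\Big)\ \geq\ \sum_{w\neq v_0,\ w\mid\frakC^-}\mu_p(\lam_w)+v_p\Big(\frac{\log_p(|\varpi_{v_0}|)}{\log_p(1+p)}\Big)=\mu_{p,v_0}'(\lam).\]
If instead $v_0\ndivide\cond(\lam)$, then $v_0$ is inert and lies outside $\frakC^-$, so all $\frakC^-$ factors survive and contribute $\mu_p'(\lam)$; the differentiated factor $\bfa_{\beta,v_0}'(\bfc_{v_0},\lam)$ is divisible by $\log_p(|\varpi_{v_0}|)$ by Lemma~3.4(3), and $v_p(\log_p(|\varpi_{v_0}|))\geq 1=v_p(\log_p(1+p))$ because $v_0\ndivide p$, so dividing by $\log_p(1+p)$ leaves valuation $\geq \mu_p'(\lam)$. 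In either case the normalised coefficient has valuation at least $\min_{v|\frakC^-}\{\mu_p'(\lam),\mu_{p,v}'(\lam)\}$, and taking the infimum over $(u,a,\beta)$ gives the claim.

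The hard part will be the local valuation analysis at the non-split places: proving $v_p(A_{\beta,w}(\lam))\geq\mu_p(\lam_w)$ and controlling the derivative factor through Lemma~3.4, together with the verification that the remaining unit and indicator factors really do contribute non-negative valuation. This is precisely where the explicit Whittaker integral formulas of \cite{Hs1} and \cite{Hs2} must be invoked.
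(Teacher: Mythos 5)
Your proposal is correct and follows essentially the same route as the paper's own proof: reduce via Theorem~2.2 to bounding each normalised Fourier coefficient, invoke Corollary~3.3 to isolate a single vanishing local factor at a non-split place, apply the Leibnitz rule so only the term differentiating that factor survives, and then combine Lemma~3.4(3) with the estimate $v_p(A_{\beta,v}(\lam))\geq\mu_p(\lam_v)$ from \cite[(4.16) and (4.17)]{Hs2} in the two cases. Your case division by $v_0\mid\cond(\lam)$ versus $v_0\ndivide\cond(\lam)$ coincides with the paper's division by $v_1\mid\frakC^-$ versus $v_1\ndivide\frakC^-$ (by the dichotomy in Proposition~3.2), and your explicit check that $v_p(\log_p(|\varpi_{v_0}|))\geq 1=v_p(\log_p(1+p))$ for $v_0\ndivide p$ merely spells out what the paper leaves implicit.
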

\begin{proof}By Theorem 2.2, it suffices to show that for every $(u,a) \in \cD_{0} \times \cD_{1}$ and $\beta \in \cF_{+}$, 
we have
\beq
\label{E}
v_{p}(\frac{ \bfa_{\beta}(\mathcal{E}_{\lambda',u}, \mathfrak{c}(a))}{\log_{p}(1+p)} )\geq  \min_{v|\mathfrak{C^{-}}} \{\mu_{p}^{'}(\lam), \mu_{p,v}'(\lam)\}.
\eeq
In view of the formula for the Fourier coefficients, we only need to consider that the case $\beta$ is coprime-to-$\mathfrak{F}$ and 
$\beta \in u_{v}(1+\varpi_{v}O_v)$ for all $v|p$.
We can also suppose that there exists exactly one non-split place $v$ that exactly one of the local Whittaker coefficients
 $\bfa_{\beta,v}(\bfc_v,\lam)$ and $A_{\beta,v}(\lam)$ vanishes (cf. Corollary 3.3).\\
\\
We have the following two cases.\\
\\
Case I - $v_{1}|\mathfrak{C^{-}}$. 
From the formula for the Fourier coefficients, the Leibnitz product rule, it follows that\\
\beq
 \bfa_{\beta}(\mathcal{E}_{\lambda',u}, \mathfrak{c}(a))=\beta^{(k-1)\Sg}\prod_{w|\Csplit}\lam_{w}(\beta)\prod_{w\in\Sg_p}\lam_w(\beta)
 \prod_{v\ndivide \infty\frakD}\bfa_{\beta,v}(\bfc_v,\lam) \cdot A_{\beta,v_1}'(\lam)\prod_{v\neq v_1|\frakC^- D_{\cK/\cF}}A_{\beta,v}(\lam).
\eeq
\noindent\\
By part (3) of Lemma 3.4, 
$$v_{p} ( \frac{ A_{\beta,v_1}'(\lam)}{\log_{p}(1+p)}) \geq v_{p}(\frac{\log_{p}(|\varpi_{v_{1}}|)}{\log_{p}(1+p)}).$$
If $v|\mathfrak{C}^-$, then in \cite[(4.16) and (4.17)]{Hs2} it is shown that
$$v_{p}(A_{\beta,v}(\lam) ) \ge \mu_{p}(\lambda_{v}).$$\\
Thus,
\beq
\label{E}
v_{p}(\frac{ \bfa_{\beta}(\mathcal{E}_{\lambda',u}, \mathfrak{c}(a))}{\log_{p}(1+p)} )\geq  \mu_{p,v_1}'(\lam).
\eeq
\\
Case II - $v_{1} \ndivide \mathfrak{C^{-}}$. From the formula for the Fourier coefficients and the Leibnitz product rule, it follows that\\
\beq
 \bfa_{\beta}(\mathcal{E}_{\lambda',u}, \mathfrak{c}(a))=\beta^{(k-1)\Sg}\prod_{w|\Csplit}\lam_{w}(\beta)\prod_{w\in\Sg_p}\lam_w(\beta)
 \cdot \bfa_{\beta,v_1}'(\bfc_{v_1},\lam)\prod_{v\neq v_1, v\ndivide \infty\frakD}\bfa_{\beta,v}(\bfc_v,\lam) \prod_{v|\frakC^- D_{\cK/\cF}}A_{\beta,v}(\lam).
\eeq
\noindent\\
By a similar argument as in the previous case, we conclude that
\beq
\label{E}
v_{p}(\frac{ \bfa_{\beta}(\mathcal{E}_{\lambda',u}, \mathfrak{c}(a))}{\log_{p}(1+p)} )\geq  \mu_{p}'(\lam).
\eeq
We have thus proven (3.5) and this finishes the proof.\\
\end{proof}
\noindent \\
\subsection{Upper bound I }
In this subsection, we prove an upper bound 
$$\mu(L_{\Sigma, \lambda}^{'}) \leq \mu_{p}'(\lam)$$
of the equality asserted in Theorem A.\\
\\
Let ${\bf{c}}^{0}\in \A_{\cF}$ such that $\mathfrak{il}_{\cF}({\bf{c}}^{0}) = \mathfrak{c}(\cO_\cK)$. We recall that $\mathfrak{c}(\cO_\cK)$ is as in\cite[\S3.1]{Hs1}.\\
\\
We begin with a couple of local lemmas.\\
\begin{lm} There exist infinitely many inert primes $\mathfrak{q}_1$ and $\eta_{\mathfrak{q}_{1}} \in \cF_{\mathfrak{q}_{1}}^{\times}$ such that the following conditions hold:\\
(1). $$v_{p}(\frac{ \bfa_{\eta_{\mathfrak{q}_1}}'(\lam, {\bf{c}}_{\mathfrak{q}_1}^{0})}{\log_{p}(1+p)})=0,$$
(2). $$ W(\lam_{\mathfrak{q}_1}^{*})\tau_{\cK_v/\cF_v}(\eta_{\mathfrak{q}_1})=(-1)\cdot\lam_{\mathfrak{q}_1}^{*}(\xi).$$
\end{lm}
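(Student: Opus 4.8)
The plan is to translate the two conditions into explicit numerical conditions on the inert prime $\mathfrak{q}_1$ and on the valuation $v(\eta_{\mathfrak{q}_1})$, and then to realize both simultaneously for infinitely many primes by a Chebotarev argument. Throughout I would take $\mathfrak{q}_1\nmid\frakD=p\frakC\frakC^{c}D_{\cK/\cF}$, prime to $\xi$, and unramified in $\cF/\Q$, so that $\lam_{\mathfrak{q}_1}^{*}$ is unramified (i.e. $a(\lam_{\mathfrak{q}_1}^{*})=0$) and $v_{\mathfrak{q}_1}(\mathfrak{c}(\cO_\cK))=0$, since $\mathfrak{c}(\cO_\cK)=\cD_\cF^{-1}(\xi\cD_{\cK/\cF}^{-1})$ is a unit at $\mathfrak{q}_1$. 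Lemma 3.1(3) then gives $W(\lam_{\mathfrak{q}_1}^{*})=\lam_{\mathfrak{q}_1}^{*}(\xi)$, so condition (2) becomes $\tau_{\cK_{\mathfrak{q}_1}/\cF_{\mathfrak{q}_1}}(\eta_{\mathfrak{q}_1})=-1$, which for the inert prime $\mathfrak{q}_1$ just says $v(\eta_{\mathfrak{q}_1})$ is odd. By (3.1) and self-duality the unramified character entering the local Whittaker factor sends $\varpi_{\mathfrak{q}_1}$ to $-1$, so $\bfa_{\eta_{\mathfrak{q}_1},\mathfrak{q}_1}(\bfc_{\mathfrak{q}_1}^{0},\lam)$ is a truncated alternating sum $\sum_{i=0}^{M}(-1)^{i}$ whose length $M$ depends on $v(\eta_{\mathfrak{q}_1})$ via Proposition 2.1. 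Condition (2) is precisely the vanishing criterion of Case II of Proposition 3.2, namely $M$ odd; hence it guarantees both that this undifferentiated factor vanishes (so $\mathfrak{q}_1$ is a legitimate vanishing place for the Leibniz expansion of Corollary 3.3) and that the alternating sum has odd length.

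Next I would compute the derivative appearing in condition (1). Replacing $\lam$ by $\lam N^{p^{n}}$ multiplies the $i$-th summand by $N^{p^{n}}(\varpi_{\mathfrak{q}_1}^{i})=|\varpi_{\mathfrak{q}_1}|^{ip^{n}}$ (with $c=1$ since $\mathfrak{q}_1$ is non-split, cf. Lemma 3.4), so passing to the $p$-adic limit with $\lim_{n}(a^{p^{n}}-1)/p^{n}=\log_p(a)$ yields
\[
\bfa_{\eta_{\mathfrak{q}_1}}'(\lam,\bfc_{\mathfrak{q}_1}^{0})=\log_p(|\varpi_{\mathfrak{q}_1}|)\sum_{i=0}^{M}(-1)^{i}\,i .
\]
An elementary evaluation gives $\sum_{i=0}^{M}(-1)^{i}i=-(M+1)/2$ for $M$ odd, so that
\[
v_p\!\left(\frac{\bfa_{\eta_{\mathfrak{q}_1}}'(\lam,\bfc_{\mathfrak{q}_1}^{0})}{\log_p(1+p)}\right)=v_p(\log_p|\varpi_{\mathfrak{q}_1}|)+v_p\!\left(\frac{M+1}{2}\right)-1 .
\]
I would then use the freedom in $v(\eta_{\mathfrak{q}_1})$ to pick $M$ odd with $(M+1)/2$ a $p$-adic unit, for instance $M=1$, which kills the middle term.

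It then remains to arrange $v_p(\log_p|\varpi_{\mathfrak{q}_1}|)=1$. Writing $|\varpi_{\mathfrak{q}_1}|=\ell^{-f}\in\Z_p^{\times}$, where $\ell$ is the rational prime below $\mathfrak{q}_1$ and $f$ its residue degree over $\Q$, and using that $\log_p$ maps $\Z_p^{\times}$ into $p\Z_p$, one always has $v_p(\log_p|\varpi_{\mathfrak{q}_1}|)\geq 1$, with equality precisely when $p\nmid f$ and $\ell$ avoids a congruence condition modulo $p^{2}$. I would produce the primes by Chebotarev in $\widetilde{\cK}(\zeta_{p^{2}})/\Q$, with $\widetilde{\cK}$ the Galois closure of $\cK/\Q$: inertness of $\mathfrak{q}_1$ in $\cK/\cF$ with residue degree prime to $p$ (e.g. residue degree one) is a nonempty Frobenius condition in $\Gal(\widetilde{\cK}/\Q)$, while $v_p(\log_p|\varpi_{\mathfrak{q}_1}|)=1$ is a Frobenius condition in $\Gal(\Q(\zeta_{p^{2}})/\Q)$. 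Because the hypothesis $(ord)$ together with $p\nmid D_\cF$ makes $\cK/\Q$ unramified at $p$ while $\Q(\zeta_{p^{2}})/\Q$ is totally ramified at $p$, the fields $\widetilde{\cK}$ and $\Q(\zeta_{p^{2}})$ are linearly disjoint over $\Q$; hence the combined condition has positive density and, after discarding the finitely many primes dividing $\frakD\xi\cD_\cF$, yields infinitely many admissible $\mathfrak{q}_1$, each equipped with an $\eta_{\mathfrak{q}_1}$ of the chosen odd valuation.

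The main obstacle is this last step: securing the two Frobenius conditions at once, i.e. that inertness in $\cK/\cF$ — needed both for condition (2) and to force the local character value $-1$ that makes the Whittaker factor a genuine alternating sum — is compatible with the cyclotomic valuation condition $v_p(\log_p|\varpi_{\mathfrak{q}_1}|)=1$. This is exactly where the linear disjointness coming from $(ord)$ and $p\nmid D_\cF$ is essential. A secondary delicacy is the parity bookkeeping, since the single odd valuation of $\eta_{\mathfrak{q}_1}$ must simultaneously make the undifferentiated Whittaker factor vanish (condition (2)) and leave $(M+1)/2$ prime to $p$; both are secured by the same choice, e.g. $M=1$.
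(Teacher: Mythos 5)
Your proposal is correct and takes essentially the same route as the paper: it reduces condition (1) to $v_{p}(\log_{p}|\varpi_{\mathfrak{q}_{1}}|)=1$, computes $\bfa_{\eta_{\mathfrak{q}_1}}'(\lam,{\bf{c}}_{\mathfrak{q}_1}^{0})=\log_{p}(|\varpi_{\mathfrak{q}_{1}}|)\sum_{i=0}^{M}(-1)^{i}i$ by the Leibnitz rule, and then chooses $v(\eta_{\mathfrak{q}_{1}})$ odd (e.g.\ $M=1$) so that the same parity choice secures (2) via Lemma 3.1(3) and makes $(M+1)/2$ a $p$-adic unit. Your Chebotarev step in $\widetilde{\cK}(\zeta_{p^{2}})/\Q$, using linear disjointness from unramifiedness at $p$, simply makes explicit the paper's one-line appeal to the Dirichlet and Chebotarev density theorems.
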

\begin{proof}
We first consider the condition
\beq
\label{E}v_{p}(\frac {\log_{p}(|\varpi_{\mathfrak{q}_{1}}|)}{\log_{p}(1+p)})=0.
\eeq
The condition amounts to $q_1 \equiv a (\mod{p})$ but $q_1 \not\equiv a (\mod{p^2})$ and $ p \ndivide [\cF_{\mathfrak{q}_1}:\Q_{q_1}]$, where $(q_1)=\mathfrak{q}_1$,
$q_{1} > 0$ and $1\leq a \leq p-1$. 
From Dirichlet density theorem and Chebotarev density theorem, it now follows that there exist  infinitely many inert primes $\mathfrak{q}_1$ satisfying (3.10). 
We choose any such $\mathfrak{q}_1 \ndivide \mathfrak{D}$.\\
\\
From the local Whittaker formula (cf. Prop. 3.1 and (3.20) and the Leibnitz product rule, it now follows that  
\beq
\label{E}\bfa_{\eta_{\mathfrak{q}_1}}'(\lam, {\bf{c}}_{\mathfrak{q}_1}^{0}) = 
\log_{p}(|\varpi_{\mathfrak{q}_{1}}|)\sum_{i=0}^{v_{\mathfrak{q}_{1}}(\eta_{\mathfrak{q}_{1}}  {\bf{c}}_{\mathfrak{q}_1}^{0})} (-1)^{i} i .
\eeq
\\
Note that $\sum_{i=0}^{i=n} (-1)^{i}i$ equals $\frac{n}{2}$ if $n$ is even and $\frac{-(n+1)}{2}$ if $n$ is odd.
The existence of $\eta_{\mathfrak{q}_{1}} \in \cF_{\mathfrak{q}_{1}}^{\times}$ satisfying (1) thus follows.\\
\\
From part (3) of Lemma 3.1, (2) holds whenever $v_{\mathfrak{q}_{1}}(\eta_{\mathfrak{q}_{1}}) \equiv v_{\mathfrak{q}_{1}}({\bf{c}}_{\mathfrak{q}_1}^{0})+ 1 (\mod{2})$. 
Thus, there exists  $\eta_{\mathfrak{q}_{1}} \in \cF_{\mathfrak{q}_{1}}^{\times}$ satisfying (1) and (2).\\
\end{proof}
\noindent For convenience, let us state \cite[Prop. $6.3$]{Hs1} as the following lemma.\\
\begin{lm}(Hsieh) Let $v|\mathfrak{C^{-}}$. There exists an $\eta_{v} \in \cF_{v}^{\times}$ such that
the following conditions hold:\\
(1).$$ v_{p}( A_{\eta_{v},v}(\lam) ) = \mu_{p}(\lam_{v}), $$
(2). $$ W(\lambda_{v}^{*})\tau_{\cK_{v}/\cF_{v}}(\eta_{v}) = \lambda_{v}^{*}(\xi).$$
\end{lm}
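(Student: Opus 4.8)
Since the lemma is a restatement of \cite[Prop.\,6.3]{Hs1}, the plan is to reconstruct the local computation underlying it. Because $v\mid\mathfrak{C}^-$, the place $v$ is non-split, so $\cK_v=\cK\ot_\cF\cF_v$ is a quadratic field extension of $\cF_v$ and the self-duality gives $\lam_v^*|_{\cF_v^\times}=\tau_{\cK_v/\cF_v}$. The first step is to make the local Whittaker integral $A_{\eta_v,v}(\lam)=L(0,\lam_v)\wtd A_{\eta_v}(\lam_v)$ of \eqref{E:AB.V} fully explicit: since $x_v\mapsto x_v+\bftheta_v$ parametrises an $\cF_v$-line in $\cK_v$ and the additive twist $\psi(-d_{\cF_v}^{-1}\eta_v x_v)$ forces the regularised integral to stabilise, $\wtd A_{\eta_v}(\lam_v)$ becomes a finite character sum whose terms are values of $\lam_v^{-1}$ on the lattice translates $x_v+\bftheta_v$. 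This is exactly the computation carried out in \cite[\S4.3]{Hs2}, and I would specialise those formulas to the self-dual $\lam$.

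For condition (1) I would track the $p$-adic valuation of the resulting sum as $\eta_v$ varies over $\cF_v^\times$. The lower bound $v_p(A_{\eta_v,v}(\lam))\ge\mu_p(\lam_v)$ is already recorded in the proof of Proposition 3.5 (via \cite[(4.16),(4.17)]{Hs2}); the content of (1) is that this bound is sharp for a suitable $\eta_v$. Here I would choose $\eta_v$ so that the additive character isolates a single pair of lattice translates, reducing the valuation of the sum to $v_p(\lam_v(x)-1)$ for an element $x\in\cK_v^\times$ realising the defining infimum $\mu_p(\lam_v)=\inf_{x}v_p(\lam_v(x)-1)$. This pins the valuation of $A_{\eta_v,v}(\lam)$ to exactly $\mu_p(\lam_v)$.

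For condition (2) I would feed the non-split case of Lemma 3.1 into the requirement. Since $W(\lam_v^*)=(-1)^{a(\lam_v^*)+v(\mathfrak c(\cO_\cK))}\lam_v^*(\xi)$, the identity $W(\lam_v^*)\tau_{\cK_v/\cF_v}(\eta_v)=\lam_v^*(\xi)$ is equivalent to the single sign condition $\tau_{\cK_v/\cF_v}(\eta_v)=(-1)^{a(\lam_v^*)+v(\mathfrak c(\cO_\cK))}$, which constrains $\eta_v$ to one coset of $\cF_v^\times$ modulo $N_{\cK_v/\cF_v}(\cK_v^\times)$ (a parity condition on $v(\eta_v)$ when $v$ is inert). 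The main obstacle, and the real content of the lemma, is to satisfy (1) and (2) simultaneously: I must show that among the $\eta_v$ achieving the minimal valuation $\mu_p(\lam_v)$ in (1) there is one lying in the coset prescribed by (2). This requires understanding precisely which $\eta_v$ are optimal in (1); the explicit Whittaker formula should show that the minimisers are not confined to the wrong coset (for ramified $v$ one has the freedom to adjust $\eta_v$ by a unit, on which $\tau_{\cK_v/\cF_v}$ is non-trivial, while for inert $v$ one checks that optimal $\eta_v$ occur for both parities of $v(\eta_v)$). Making this freedom precise from the local computation is the step I expect to require the most care, and it is exactly what \cite[Prop.\,6.3]{Hs1} supplies.
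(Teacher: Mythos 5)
The paper itself gives no proof of this lemma: it is introduced with the sentence ``For convenience, let us state \cite[Prop.\,6.3]{Hs1} as the following lemma,'' so the official argument is simply that citation to Hsieh. Your proposal correctly recognises this and is consistent with it --- your sketch (explicating $A_{\eta_v,v}(\lam)$ via the local Whittaker formulas of \cite[\S 4.3]{Hs2}, showing the lower bound $v_p(A_{\eta_v,v}(\lam))\geq \mu_p(\lam_v)$ is attained, and reducing condition (2) via Lemma 3.1 to a norm-coset/parity constraint on $\eta_v$ that can be met simultaneously) is a faithful outline of exactly what Hsieh's proposition supplies, so you are taking essentially the same route as the paper.
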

\noindent \\
With enough preparations, we have the following proposition.\\
\begin{prop} There exists $\beta \in \cF_{+}$, $u \in \cD_0$ and $\mathfrak{c}(a)$ such that\\
$$v_{p}(\frac{ \bfa_{\beta}(\mathcal{E}_{\lambda',u}, \mathfrak{c}(a))}{\log_{p}(1+p)} )= \mu_{p}'(\lam).$$
In particular, $$\mu(L_{\Sigma, \lambda}^{'}) \leq \mu_{p}'(\lam).$$
\end{prop}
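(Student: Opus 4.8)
The plan is to leverage Theorem 2.2, which expresses $\mu(L_{\Sigma,\lam}^{'})$ as the infimum of $v_p\!\left(\bfa_\beta(\mathcal{E}_{\lam',u},\frakc(a))/\log_p(1+p)\right)$ over all $(u,a)\in\cD_0\x\cD_1$ and $\beta\in\cF_+$. Since $\mu_p'(\lam)=\sum_{v|\frakC^-}\mu_p(\lam_v)$, to get the upper bound it suffices to produce a \emph{single} triple $(\beta,u,a)$ for which this valuation equals exactly $\mu_p'(\lam)$; any such triple forces the infimum to be $\le\mu_p'(\lam)$. I would take $a$ to be the representative with $\frakc(a)=\frakc(\cO_\cK)$, so that the relevant idele is $\bfc=\bfc^{0}$, matching the normalization of Lemma 3.6.

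The construction proceeds by assembling local components of $\beta$ and then globalizing. At the auxiliary place I use Lemma 3.6 to fix an inert prime $\frakq_1\ndivide\frakD$ and $\eta_{\frakq_1}\in\cF_{\frakq_1}^\times$ with $v_p\!\left(\bfa'_{\eta_{\frakq_1}}(\lam,\bfc^{0}_{\frakq_1})/\log_p(1+p)\right)=0$ and flipped local sign $W(\lam_{\frakq_1}^*)\tau_{\cK_{\frakq_1}/\cF_{\frakq_1}}(\eta_{\frakq_1})=(-1)\lam_{\frakq_1}^*(\xi)$; at each $v|\frakC^-$ I use Lemma 3.7 (Hsieh) to fix $\eta_v\in\cF_v^\times$ with $v_p(A_{\eta_v,v}(\lam))=\mu_p(\lam_v)$ and \emph{unflipped} sign $W(\lam_v^*)\tau_{\cK_v/\cF_v}(\eta_v)=\lam_v^*(\xi)$. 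Using weak approximation together with positivity at the archimedean places, I would then choose $\beta\in\cF_+$ agreeing with $\eta_{\frakq_1}$ at $\frakq_1$ and with each $\eta_v$ at $v|\frakC^-$ to sufficiently high order, lying in $u_v(1+\varpi_v O_v)$ for all $v|p$ (which selects $u\in\cD_0$), coprime-to-$\mathfrak{F}$, and a unit with the unflipped sign at every remaining place (archimedean by Lemma 3.1(1), split by Lemma 3.1(2), inert or ramified away from $\frakC^-$ by Lemma 3.1(3)). The consistency point is that the global root number $-1$ is exactly the parity relation $\prod_v W(\lam_v^*)\tau_{\cK_v/\cF_v}(\beta)=-\prod_v\lam_v^*(\xi)$ from the proof of Proposition 3.2: once every place other than $\frakq_1$ carries the unflipped sign, this parity forces $\frakq_1$ to be the \emph{unique} non-split place at which the toric coefficient of $\mathcal{E}_{\lam,u}$ vanishes, so Corollary 3.3 applies with distinguished place $\frakq_1\ndivide\frakC^-$.

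With these choices, Corollary 3.3 and the Leibniz rule give the factorization
\[
\bfa_\beta(\mathcal{E}_{\lam',u},\frakc(a))=\beta^{(k-1)\Sg}\prod_{w|\Csplit}\lam_w(\beta)\prod_{w\in\Sg_p}\lam_w(\beta)\cdot\bfa'_{\beta,\frakq_1}(\bfc_{\frakq_1},\lam)\!\!\prod_{\substack{v\ne\frakq_1\\ v\ndivide\infty\frakD}}\!\!\bfa_{\beta,v}(\bfc_v,\lam)\prod_{v|\frakC^-D_{\cK/\cF}}\!\!A_{\beta,v}(\lam),
\]
and I would read off the $p$-adic valuation term by term. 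The prefactor $\beta^{(k-1)\Sg}$ and the terms $\lam_w(\beta)$ for $w|\Csplit$ and $w\in\Sg_p$ are $p$-adic units because $\beta$ is a unit at $p$; the differentiated factor at $\frakq_1$ contributes $0$ after dividing by $\log_p(1+p)$ by Lemma 3.6(1); the unramified Whittaker values $\bfa_{\beta,v}(\bfc_v,\lam)$ with $v\ne\frakq_1$, $v\ndivide\infty\frakD$ are units; each $A_{\beta,v}(\lam)$ with $v|\frakC^-$ contributes exactly $\mu_p(\lam_v)$ by Lemma 3.7(1); and the ramified factors $A_{\beta,v}(\lam)$ with $v|D_{\cK/\cF}$, $v\ndivide\frakC^-$ are units. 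Summing gives $v_p\!\left(\bfa_\beta(\mathcal{E}_{\lam',u},\frakc(a))/\log_p(1+p)\right)=\sum_{v|\frakC^-}\mu_p(\lam_v)=\mu_p'(\lam)$, which by Theorem 2.2 yields $\mu(L_{\Sigma,\lam}^{'})\le\mu_p'(\lam)$.

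I expect the main obstacle to be the globalization step: realizing all the prescribed local valuations \emph{and} signs by one totally positive $\beta$ while guaranteeing that no place other than $\frakq_1$ introduces either a sign flip (which would violate the parity and move the vanishing elsewhere) or a $p$-adic non-unit. In particular, the delicate checks are that the ramified places dividing $D_{\cK/\cF}$ but not $\frakC^-$ neither vanish nor contribute to the $\mu$-invariant, and that the unramified prime-to-$\frakD$ Whittaker values remain units for the chosen $\beta$.
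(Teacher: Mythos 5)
Your local analysis (Lemmas 3.6 and 3.7, the sign bookkeeping via Lemma 3.1, Corollary 3.3 and the Leibniz factorization) matches the paper, but the globalization step has a genuine gap, and you in fact flagged its location yourself. You propose to produce $\beta$ by weak approximation, agreeing with $\eta_{\frakq_1}$ and the $\eta_v$ ($v|\frakC^-$) at finitely many places, and then demand that $\beta$ be ``a unit with the unflipped sign at every remaining place.'' Weak approximation controls $\beta$ at only finitely many places; at the infinitely many remaining primes the divisor of $\beta$ is uncontrolled. At an inert prime $v\nmid \frakq_1\frakC^-$ the unflipped-sign condition is, by Lemma 3.1(3), the parity condition $v(\beta)\equiv v(\mathfrak{c}(\cO_\cK))\pmod 2$, and if it fails at even one stray inert prime you get a second vanishing Whittaker factor, so by Corollary 3.3 the differentiated coefficient $\bfa_\beta(\mathcal{E}_{\lam',u},\frakc(a))$ is $0$, not of valuation $\mu_p'(\lam)$. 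Moreover requiring $\beta$ to be a unit outside the prescribed set is a class-group obstruction that approximation cannot meet. The paper circumvents exactly this: it first assembles an \emph{idele} $\eta$ carrying the unflipped sign at every finite $v\neq v_{\frakq_1}$, observes that the product formula for local root numbers together with $W(\lam^*)=-1$ forces $\tau_{\cK/\cF}(\eta)=1$, and then writes $\eta=\beta N_{\cK/\cF}(a)$ with $\beta\in\cF_+$. Since $\tau_v(N_{\cK/\cF}(a))=1$ at every place, this single identity transfers the correct sign to $\beta$ at \emph{all} places simultaneously, which no approximation argument can do; approximation is then used only to make $a\equiv 1 \pmod{p(\frakq_1\frakC^-)^n}$ so that $\beta$ is close to the chosen $\eta_v$ at the critical places.

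Your second deviation compounds the problem: you freeze $a$ so that $\frakc(a)=\frakc(\cO_\cK)$ and $\bfc=\bfc^0$, whereas in the paper the polarization class is a \emph{dependent} variable, chosen after $\beta$ via $\mathfrak{J}=(\beta)\mathfrak{c}(\cO_\cK)N_{\cK/\cF}(\mathfrak{a})^{-1}=(\beta)\mathfrak{c}(\mathfrak{a})$ (the parity condition at inert places, now valid everywhere thanks to the norm-theorem step, is what guarantees $\mathfrak{a}$ exists). The paper then sets $\bfc_v=\beta^{-1}$ at all $v$ prime to $p\frakq_1\frakC\frakC^c$, so that $v(\beta\bfc_v)=0$ and each residual unramified factor $\bfa_{\beta,v}(\bfc_v,\lam)$ is literally the single term $1$. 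With your fixed $\bfc^0$ this fails: at a split prime where $\beta$ happens to have positive valuation, $\bfa_{\beta,v}(\bfc^0_v,\lam)$ is a nontrivial geometric-type sum of unit values of $\lam^*$, which can be divisible by $p$ (or vanish), so your assertion that ``the unramified prime-to-$\frakD$ Whittaker values remain units'' is unsupported and in general false. In short, the two missing ideas are (i) using the global root number $-1$ plus the Hasse norm theorem to realize the sign conditions at all places at once, and (ii) letting the polarization datum $(\mathfrak{c}(a),\bfc)$ depend on $\beta$ so as to trivialize every factor you cannot otherwise control.
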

\begin{proof} We basically modify the strategy in \cite[proof of Prop. $6.7$]{Hs2} 
to our setting.\\
\\
For $v=v_{\mathfrak{q}_1}$ or $v|\mathfrak{C}^-$, let $\eta_{v}$ be as in Lemma 3.6 and 3.7, respectively. We extend 
$(\eta_{v})_{v=v_{\mathfrak{q}_1}, v|\mathfrak{C^{-}}}$ to an idele $\eta = (\eta_{v})$
in $\bf A_{\cF}^{\times}$ such that\\
\begin{itemize}
\item $\eta_v=1$ for all split $v\ndivide p$,
\item $ W(\lambda_{v}^{*})\tau_{\cK_{v}/\cF_{v}}(\eta_{v}) = \lambda_{v}^{*}(\xi)$
for every finite place $v \neq v_{\mathfrak{q}_1}$.\\
\end{itemize}
The existence of $\eta$ indeed follows from Lemma 3.1.\\
\\
Recall, 
$$W(\lambda_{\sigma}^{*}) = \lambda_{\sigma}^{*}(\xi)$$ for $\sigma \in \Sigma$. From $W(\lambda^{*}) = -1$,
we thus conclude that $\tau_{\cK/\cF}(\eta) = 1$. 
In particular, $\eta$ can be written as $\beta N_{\cK/\cF}(a)$ for some $\beta \in \cF_{+}$ and $a \in \bf{A_{\cK}^{\times}}$.
By the approximation theorem, $a$ can be chosen so that $ a \equiv 1 (\mod {p(\mathfrak{q}_1 \mathfrak{C^{-}})^{n}})$ for sufficiently large $n$.\\
\\
Summarising, for every sufficiently small $\epsilon$, we have $\beta \in \cF^{\times}_{+} \cap O_{(p\mathfrak{F}\mathfrak{F^{c}})}$ such that\\
\begin{itemize}
\item $ |\beta - \eta_{v}| < \epsilon$ for all $v=v_{\mathfrak{q}_1}$ and $v$ dividing $\mathfrak{C^{-}}$,
\item $ W(\lambda_{\mathfrak{q}_{1}}^{*})\tau_{\cK_{\mathfrak{q}_{1}}/\cF_{\mathfrak{q}_{1}}}(\beta) = (-1)\cdot\lambda_{\mathfrak{q}_{1}}^{*}(\xi)$ and 
$W(\lambda_{v}^{*})\tau_{\cK_{v}/\cF_{v}}(\beta) = \lambda_{v}^{*}(\xi)$ for every finite place $v \neq v_{\mathfrak{q}_1}$.\\
\end{itemize}
We now choose $\epsilon$ small enough so that 
$A_{\beta,v}(\lam) = A_{\eta_{v},v}(\lam)$ 
for all $v|\mathfrak C^{-}$
and $\bfa_{\beta,\mathfrak{q}_1}'({\bf{c}}_{\mathfrak{q_1}}^{0},\lam) = \bfa_{\eta_{\mathfrak{q}_1}}'({\bf{c}}_{\mathfrak{q_1}}^{0},\lam) $.\\
\\
Let $\mathfrak {J} = \mathfrak{q}_{1}^{v_{\mathfrak{q}_1}(\beta)+v_{\mathfrak{q}_1}(\frakc(\cO_\cK))} \cdot \prod_{\mathfrak{q}|\mathfrak{C}} \mathfrak{q}^{v_{\mathfrak{q}}(\beta)}.$
From Lemma 3.1, it follows that $v(\beta) \equiv v(\mathfrak{c}(\cO_\cK)) ( \mod {2} )$ for every inert place 
$v \ndivide \mathfrak{q}_1\mathfrak{C^{-}}$.
Thus, there exists a fractional ideal $\mathfrak{a}$ of $\cO_\cK$ such that\\
\beq\label{E}\mathfrak{J} = (\beta) \mathfrak{c}(\cO_\cK) N_{\cK/\cF}(\mathfrak{a})^{-1}= (\beta) \mathfrak{c}(\mathfrak{a}).\eeq
\noindent\\
We now define ${\bf{c}} \in \A_{\cF,f}^{\times}$ by ${\bf {c}}_{v} = \beta^{-1}$ if $v$ is prime to $p\mathfrak{q}_{1}\mathfrak{C}\mathfrak{C^{c}}$, 
${\bf{c}}_{v_{\mathfrak{q}_1}}={\bf{c}}_{\mathfrak{q}_{1}}^{0}$ and
${\bf {c}}_{v}=1$ otherwise. Thus, $\bf \mathfrak{il}_{\cF}(c)= \mathfrak{c}(\mathfrak{a})$. Let $u \in \mathcal{U}_{p}$ such that $u \equiv \beta$ ( mod $p$).\\
\\
From Proposition 3.2, the Fourier coefficient formula and the Leibnitz product rule, we have\\
$$\bfa_{\beta}(\mathcal{E}_{\lambda',u}, \mathfrak{c}(a))=\beta^{(k-1)\Sg}\prod_{w|\Csplit}\lam_{w}(\beta)\prod_{w\in\Sg_p}\lam_w(\beta)
\cdot \bfa_{\beta,\mathfrak{q}_1}'({\bf{c}}_{\mathfrak{q_1}}^{0},\lam) \prod_{v\neq v_{\mathfrak{q}_1},v\ndivide \infty\frakD}\bfa_{\beta,v}(\frakc_v,\lam) \prod_{v|\frakC^- D_{\cK/\cF}}A_{\beta,v}(\lam)$$
\beq
= C_{\beta} \prod_{ w | \mathfrak{F}} \lambda_{w}(\beta) \cdot \bfa_{\beta,\mathfrak{q}_1}'({\bf{c}}_{\mathfrak{q_1}}^{0},\lam) 
 \prod_{v|\frakC^-}A_{\beta,v}(\lam).\\
\eeq
From our choice $p\ndivide C_{\beta}$. We are thus done by part (2) of Lemma 3.6 and 3.7.\\
\end{proof}
\noindent \\
\subsection{Upper bound II}
In this subsection, we prove an upper bound 
$$\mu(L_{\Sigma, \lambda}^{'}) \leq \mu_{p,v}'(\lam)$$
of the equality asserted in Theorem A. This subsection is quite similar to the previous subsection.\\
\\
We again start with a couple of local lemmas.\\
\begin{lm} Let $v$ be a place dividing $\mathfrak{C^{-}}$ such that $w(\mathfrak{C^{-}})=1$. 
There exists an $\eta_{v} \in \cF_{v}^{\times}$ such that 
the following conditions hold:\\
(1). $$ v_{p} ( \frac{ A_{\eta_v}'(\lam)}{\log_{p}(1+p)}) = v_{p}(\frac {\log_{p}(|\varpi_{v}|)}{\log_{p}(1+p)}),$$
(2). $$ W(\lambda_{v}^{*})\tau_{K_{v}/F_{v}}(\eta_{v}) = (-1) \cdot \lambda_{v}^{*}(\xi).$$
\end{lm}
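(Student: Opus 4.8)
The plan is to mirror the two-step structure of the proof of Lemma 3.6, with two changes: the density-theoretic choice of an auxiliary prime is replaced by direct local analysis at the fixed place $v\mid\mathfrak{C}^-$, and the split local Whittaker coefficient $\bfa_{\beta,v}$ is replaced by the ramified local integral $A_{\beta,v}$. The starting point is that condition (2) forces the local root-number sign to be $-1$, so by the Case I analysis in the proof of Proposition 3.2 (the vanishing of the toric Eisenstein series, via \cite[Lem.\ 6.1]{Hs1}) the value $A_{\eta_v,v}(\lam)$ itself vanishes. Consequently $A_{\eta_v}'(\lam)$ is a genuine first-order quantity, equal to $\lim_{n\to\infty}A_{\eta_v,v}(\lam N^{p^n})/p^n$, and the content of condition (1) is to locate its exact valuation.

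First I would produce a closed-form expression for this derivative. Since $\lam_v$ is ramified, the local $L$-factor $L(0,(\lam N^{p^n})_v)$ equals the constant unit $1$, so $A_{\beta,v}(\lam N^{p^n})=\wtd A_\beta((\lam N^{p^n})_v)$ and the derivative is carried entirely by the Whittaker integral $\wtd A_\beta$ of \cite[\S4.3]{Hs2}. Differentiating that integral under the $N^{p^n}$-twist and invoking part (2) of Lemma 3.4 — which records, for the non-split place $v$ so that $c=1$, that the twisting derivative is $(-1)^m m\,\tau_{K/F}(u)\log_p(|\varpi_v|)$ on the valuation shell $u\varpi_v^m\cO_{\cF_v}^\times$ — pulls the factor $\log_p(|\varpi_v|)$ outside the integral. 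Because $w(\mathfrak{C}^-)=1$ the local conductor is minimal, so the integral collapses to a finite weighted sum and I obtain an expression of the shape
$$A_{\eta_v}'(\lam)=\log_p(|\varpi_v|)\cdot S(\eta_v),$$
where $S(\eta_v)$ is an explicit sum with weights $(-1)^m m$, of the same flavour as the sum $\sum_{i=0}^{n}(-1)^i i$ appearing in the proof of Lemma 3.6, depending on $\eta_v$ only through its valuation $v(\eta_v)$ up to a root-of-unity (hence unit) factor. Condition (1), namely $v_p(A_{\eta_v}'(\lam))=v_p(\log_p(|\varpi_v|))$, thus amounts to choosing $v(\eta_v)$ so that $S(\eta_v)$ is a $p$-adic unit.

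Next I would reconcile this with condition (2). By part (3) of Lemma 3.1 the equality $W(\lam_v^*)\tau_{K_v/F_v}(\eta_v)=(-1)\lam_v^*(\xi)$ prescribes the value $\tau_{K_v/F_v}(\eta_v)$; at an inert $v$ this is a parity condition on $v(\eta_v)$, and at a ramified $v$ it fixes $\eta_v$ in a coset modulo norms, leaving the valuation free modulo $2$. Using the closed evaluation of the weighted sum (as in the remark after the analogous formula in Lemma 3.6, where $\sum_{i=0}^{n}(-1)^i i$ equals $n/2$ or $-(n+1)/2$), the valuations $v(\eta_v)$ making $S(\eta_v)$ a $p$-adic unit are precisely those avoiding finitely many residue classes modulo $p$; since $p$ is odd, such valuations exist within the parity class (respectively norm coset) demanded by (2). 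Choosing such an $\eta_v$ secures both conditions simultaneously, which proves the lemma.

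The step I expect to be the main obstacle is the explicit differentiation of the ramified Whittaker integral and the verification that its first-order term factors as $\log_p(|\varpi_v|)$ times a sum whose unit locus is governed by a single congruence on $v(\eta_v)$. Unlike the situation of Lemma 3.6, here $\lam_v$ is ramified and the value $A_{\eta_v,v}(\lam)$ vanishes, so one must extract the leading term of $A_{\beta,v}(\lam N^{p^n})$ from the local formulas of \cite{Hs2} with enough precision to read off its valuation; the hypothesis $w(\mathfrak{C}^-)=1$ is exactly what keeps this sum simple enough for the unit locus of (1) and the parity locus of (2) to be compatible rather than mutually exclusive.
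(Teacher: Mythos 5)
Your overall skeleton is the paper's own: use the sign condition (2) together with \cite[Lem.\ 6.1]{Hs1} (as in Case I of Proposition 3.2) to force $A_{\eta_v,v}(\lam)=0$, so that $A_{\eta_v}'(\lam)$ is a genuine first-order quantity; factor $\log_p(|\varpi_v|)$ out of the derivative via Lemma 3.4; and then tune the valuation of $\eta_v$, within the parity class (inert) or coset (ramified) prescribed by (2) through Lemma 3.1(3), so that the remaining cofactor is a $p$-adic unit. Your inert case is exactly the paper's: there \cite[Prop.\ 4.5(3)]{Hs2} gives $\wtd A_{\eta_v}(\lam N^k)=-|\varpi_v|+\sum_{j=0}^{v(2\eta_v)}(\lam N^k)^*(\varpi_v^j)(1-|\varpi_v|)-(\lam N^k)^*(\varpi_v^{v(2\eta_v)+1})|\varpi_v|$, and the derivative is controlled by the same $\sum_i(-1)^i i$ evaluation used in Lemma 3.6.

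The gap is in the ramified case, where your structural claim fails: the local term does \emph{not} collapse to a weighted sum of the Lemma 3.6 type depending on $\eta_v$ only through its valuation. The paper instead uses \cite[Prop.\ 4.4(1)]{Hs2}, namely $\wtd A_{\eta_v}(\lam N^k)=(\lam N^k)^*(\varpi_v^{-1})|\varpi_v|^{1/2}+(\lam N^k)^*(-2\eta_v d_{\cF_v}^{-1})\,\epsilon(1,(\lam N^k)_+|\cdot|^{-1},\psi)$, so the Leibnitz derivative has three terms, one of which is the cyclotomic derivative of the local epsilon factor; one needs Tate's formula \cite[(3.6.11)]{Ta} to see that $\log_p(|\varpi_v|)$ divides it, writing $\epsilon(1,\lam'_+|\cdot|^{-1},\psi)=\log_p(|\varpi_v|)\,b$. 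Moreover, to reduce the dependence on $\eta_v$ to its valuation, the paper must normalize the test element, taking $\eta_v=(-2)^{-1}d_{\cF_v}\varpi_v^m$, which yields the cofactor $|\varpi_v|^{1/2}+(-1)^m m a+b$ with $a=\epsilon(1,\lam_+|\cdot|^{-1},\psi)$. Your concluding selection step (choose $m$ in the prescribed class avoiding finitely many residues mod $p$) then goes through, but it silently requires this affine function of $m$ to be nonconstant modulo $\frakm_{\Zbarp}$, i.e.\ $a$ to be a $p$-adic unit --- a property of the epsilon factor (a Gauss sum at a place prime to $p$) that your sketch neither states nor can extract from the ``weighted sum'' picture, in which the constants $|\varpi_v|^{1/2}$ and $b$ are invisible. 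So the inert half of your argument is sound and matches the paper, but the ramified half needs the epsilon-factor formula, its differentiation, and the explicit choice of $\eta_v$ before your unit-locus argument applies.
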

\begin{proof} We first consider the case that $v$ is ramified.\\
\\
We recall that part (2) just puts a condition on the parity of $v(\eta_{v})$ (cf. Lemma 3.1). 
We start with an $\eta_{v}$ satisfying this condition
along with $v(2\eta_{v}) \geq -1$. From \cite[Lemma 6.1]{Hs1}, $A_{\eta_v}(\lam)=0$. Now by the Leibnitz product rule 
$$A_{\eta_v}'(\lam)=L(0,\lam_v)\widetilde{A}_{\eta_v}'(\lam).$$ 
Thus, it suffices to find $\eta_v$ satisfying (1) where $A_{\eta_v}'(\lam)$ is replaced by $\widetilde{A}_{\eta_v}'(\lam)$.\\
\\
We now have the following formula from \cite[part (1) of Prop. 4.4]{Hs2}:\\
\beq 
\widetilde{A}_{\eta_{v}}(\lam N^{k})= 
(\lambda N^{k})^{*}(\varpi_{v}^{-1})|\varpi_{v}|^{1/2} + 
(\lambda N^{k})^{*}(-2\eta_{v} d_{\cF_v}^{-1} ) \epsilon ( 1, (\lambda N^{k})_{+} |\cdot|^{-1}, \psi).\eeq
\\
For the definition of local $\epsilon$-factor $\epsilon ( 1, (\lambda N^{k})_{+} |\cdot|^{-1}, \psi)$, we refer to \cite[\S3.4 and \S3.6]{Ta} and \cite[\S4.3.1]{Hs2}. 
We use the formula arising from \cite[(3.6.11)]{Ta}.\\
\\
From the Leibnitz product rule, we have
\beq \widetilde{A}_{\eta_{v}}'(\lam)
= ( \lambda^{'})^{*}(\varpi_{v}^{-1})|\varpi_{v}|^{1/2} + 
(\lambda^{'})^{*}(-2\eta_{v} d_{\cF_v}^{-1} ) \epsilon ( 1, \lambda_{+} |\cdot|^{-1}, \psi) + \lambda^{*}(-2\eta_{v} d_{\cF_v}^{-1} ) 
\epsilon ( 1, \lambda^{'}_{+} |\cdot|^{-1}, \psi)         ).\eeq
\\
Here $\epsilon ( 1, \lambda^{'}_{+} |\cdot|^{-1}, \psi)$ is the $p$-adic 
derivative of  $\epsilon ( 1, (\lambda N^{k})_{+} |\cdot|^{-1}, \psi)$, for example via the formula \cite[(3.6.11)]{Ta}. 
From the formula, $\log_p(|\varpi_v|)$ divides $\epsilon ( 1, \lambda^{'}_{+} |\cdot|^{-1}, \psi)$. Say $\epsilon ( 1, \lambda^{'}_{+} |\cdot|^{-1}, \psi)=\log_p(|\varpi_v|)b$.\\
\\
We recall
$\lambda^{*}((-2\eta_{v} d_{\cF_v}^{-1} )) = \tau_{\cK_{v}/\cF_{v}}((-2\eta_{v} d_{\cF_v}^{-1} ))$ (cf. (3.1)). This values is already determined by part (2). For simplicity, suppose that it equals one.\\
\\
We try $\eta_v$ of the form $(-2)^{-1}d_{\cF_v}\varpi_{v}^m$, for $m\in \Z$. Let $a=\epsilon ( 1, \lambda_{+} |\cdot|^{-1}, \psi)$. In view of part (2) of Lemma 3.4, we thus have
\beq 
\widetilde{A}_{\eta_{v}}'(\lam)=\log_p(|\varpi_v|)(|\varpi_v|^{1/2}+(-1)^{m}ma+b). 
\eeq
We immediately note that there exists $m\in \Z$ of given parity such that $p \ndivide |\varpi_v|^{1/2}+(-1)^{m}ma+b$. This finishes the proof of the ramified case.\\
\\
We now consider the inert case.\\
\\
The only change in this case 
is the formula for $\widetilde{A}_{\eta_{v}}(\lam N^{k})$.
We first recall that 
$\lambda N^{k}|_{O_{v}^{\times}} = 1$.\\ 
\\
Let $\eta_v \in O_v$. Thus, from \cite[part (3) of Prop. $4.5$]{Hs2}
\beq 
\widetilde{A}_{\eta_{v}}(\lam N^{k}) =-|\varpi_{v}| + \sum_{j=0}^{v(2\eta_v)} (\lambda N^{k})^{*}(\varpi_{v}^{j}) 
(1 -|\varpi_{v}|) - (\lambda N^{k})^{*}(\varpi_{v}^{v(2\eta_v)+1})|\varpi_{v}|,
\eeq
(cf. [loc. cit., (4.16)]).\\
\\
From the formula for $\lam'^{*}$ (cf. part (2) of Lemma 3.4), $\sum_{}^{}$ expression is quite similar to (3.11). 
As the argument is very similar to the proof of Lemma 3.6, we skip the details.\\
\end{proof}
\noindent \\
Summing up these preparations, we can prove the following proposition.\\
\begin{prop} 
$$\mu(L_{\Sigma,\lam}^{'}) \leq \min_{v|\mathfrak{C^{-}}} \{\mu_{p}'(\lam), \mu_{p,v}'(\lam_{v})\}.$$
\end{prop}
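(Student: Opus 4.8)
The plan is to run the argument of Proposition~3.8 again, but now taking the cyclotomic derivative at a non-split place $v_0$ dividing $\mathfrak{C}^-$ instead of at the auxiliary inert prime $\mathfrak{q}_1$. Since Proposition~3.8 already supplies $\mu(L_{\Sigma,\lam}^{'})\le\mu_p'(\lam)$, and since a quantity that is at most each member of a finite set is at most the minimum of that set, it suffices to prove, for each $v_0\mid\mathfrak{C}^-$ to which Lemma~3.9 applies, the inequality $\mu(L_{\Sigma,\lam}^{'})\le\mu_{p,v_0}'(\lam)$. By Theorem~2.2 this reduces to exhibiting a triple $(u,a,\beta)\in\cD_0\times\cD_1\times\cF_+$ with
\[
v_p\!\left(\frac{\bfa_\beta(\mathcal{E}_{\lam',u},\frakc(a))}{\log_p(1+p)}\right)=\mu_{p,v_0}'(\lam).
\]

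First I would assemble the local data. At $v_0$ I would take $\eta_{v_0}\in\cF_{v_0}^\times$ from Lemma~3.9, so that the local derivative $A_{\beta,v_0}'(\lam)$ has valuation exactly $v_p(\log_p(|\varpi_{v_0}|)/\log_p(1+p))$ and carries the broken sign $W(\lam_{v_0}^*)\tau_{\cK_{v_0}/\cF_{v_0}}(\eta_{v_0})=(-1)\lam_{v_0}^*(\xi)$. At every other $w\mid\mathfrak{C}^-$ I would take $\eta_w$ from Lemma~3.7, so that $A_{\eta_w,w}(\lam)$ has valuation exactly $\mu_p(\lam_w)$ with the unbroken sign $W(\lam_w^*)\tau(\eta_w)=\lam_w^*(\xi)$. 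I would then extend $(\eta_{v_0},(\eta_w)_w)$ to a global idele $\eta=(\eta_v)$ exactly as in Proposition~3.8, setting $\eta_v=1$ at split $v\nmid p$ and imposing $W(\lam_v^*)\tau(\eta_v)=\lam_v^*(\xi)$ at every finite $v\neq v_0$; Lemma~3.1 guarantees such $\eta_v$ exist. Because the sign is broken exactly once and $W(\lam^*)=-1$, the product formula forces $\tau_{\cK/\cF}(\eta)=1$, so $\eta=\beta N_{\cK/\cF}(a)$ for some $\beta\in\cF_+$ and $a\in\A_\cK^\times$, which I would take $\equiv 1$ modulo a high power of $p(\mathfrak{C}^-)$ by approximation. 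Choosing $\epsilon$ small makes $A_{\beta,w}(\lam)=A_{\eta_w,w}(\lam)$ for $w\mid\mathfrak{C}^-$ and $A_{\beta,v_0}'(\lam)=A_{\eta_{v_0},v_0}'(\lam)$, and I would fix $\bfc$ and $u\equiv\beta\pmod p$ as there.

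With these choices the key point is that the broken sign at $v_0$ makes $A_{\beta,v_0}(\lam)=0$ (Case~I of Proposition~3.2), so by Corollary~3.3 the derivative coefficient is controlled by differentiating precisely the $v_0$-factor: every other term of the Leibniz expansion still contains the vanishing factor $A_{\beta,v_0}(\lam)$ and hence dies. Combined with the Fourier-coefficient formula of Proposition~2.1 this yields
\[
\bfa_\beta(\mathcal{E}_{\lam',u},\frakc(a))=C_\beta\prod_{w\mid\mathfrak{F}}\lam_w(\beta)\cdot A_{\beta,v_0}'(\lam)\prod_{w\neq v_0,\,w\mid\mathfrak{C}^-}A_{\beta,w}(\lam),
\]
where $C_\beta$ collects the remaining split, prime-to-$\mathfrak{D}$, and $p$-adic factors and is a $p$-adic unit by the choice of $\beta$ (coprime to the relevant ideals and lying in $u_v(1+\varpi_vO_v)$ at $v\mid p$). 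Reading off valuations via part~(1) of Lemma~3.9 and part~(1) of Lemma~3.7 gives exactly $v_p(\log_p(|\varpi_{v_0}|)/\log_p(1+p))+\sum_{w\neq v_0,\,w\mid\mathfrak{C}^-}\mu_p(\lam_w)=\mu_{p,v_0}'(\lam)$, as required.

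I expect the genuinely hard input to be local and already dispatched in Lemma~3.9: the assertion that the $p$-adic derivative of the local Whittaker integral at $v_0$ does not vanish to higher order, resting on the explicit $\epsilon$-factor computation and the parity analysis there. Granting that, the remaining difficulty is the global bookkeeping—arranging the root-number signs so that only $v_0$ is broken, and checking that all local factors away from $v_0$ and $\mathfrak{C}^-$ are $p$-adic units—which is routine given the template of Proposition~3.8. A minor point requiring care is that Lemma~3.9 is stated under $w(\mathfrak{C}^-)=1$, so I would verify that the minimum over $v\mid\mathfrak{C}^-$ is attained at, or bounded by, such places, completing the reduction to the cases the construction handles; combining this with Proposition~3.8 then yields $\mu(L_{\Sigma,\lam}^{'})\le\min_{v\mid\mathfrak{C}^-}\{\mu_p'(\lam),\mu_{p,v}'(\lam)\}$.
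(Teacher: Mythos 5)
Your proposal matches the paper's proof of Proposition 3.10 essentially step for step: the same reduction via Theorem 2.2 and Proposition 3.8, the same local data ($\eta_{v_0}$ from Lemma 3.9 with the root-number sign broken only at $v_0$, and $\eta_w$ from Lemma 3.7 at the other places dividing $\mathfrak{C}^-$), and the same idele extension, approximation argument, and Leibnitz/valuation count borrowed from the proof of Proposition 3.8. The one point you flag but defer --- justifying the restriction to places where Lemma 3.9 applies --- is resolved in the paper exactly as you anticipate: if $\mu_p(\lam_v)=0$ for all $v\mid\mathfrak{C}^-$ then Proposition 3.8 alone suffices (as then $\mu_{p,v}'(\lam)\geq\mu_p'(\lam)$), while $\mu_p(\lam_{v_1})\neq 0$ forces $w_1(\mathfrak{C}^-)=1$ by the proof of Prop.~6.3 of \cite{Hs1}, so Lemma 3.9 applies at every place that could push the minimum below $\mu_p'(\lam)$.
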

\begin{proof} If $\mu_{p}(\lam_{v})=0$ for all $v|\mathfrak{C}^{-}$, then the proposition follows from 
Proposition 3.8. Thus, we suppose that $\mu_{p}(\lam_{v_{1}}) \neq 0$ for some $v_{1} | \mathfrak{C^{-}}$.\\
\\
In this case, $w_{1}(\mathfrak{C^{-}})=1$ (cf. \cite[proof of Prop. 6.3]{Hs1}). Thus, we are in the situation of the last two lemmas.\\
\\
Let $\eta_{v_{1}}$ be as in these lemmas depending on whether $v_1$ is ramified or inert.
Let $\eta_{v}$ for $v|\mathfrak{C^{-}}$ and $v \neq v_{1}$ be as in Lemma 3.9.\\
\\
Extend $(\eta_{v})_{v|\mathfrak{C^{-}}}$ to an idele $(\eta_{v})$ in $\bf A_{\cF}^{\times}$ in the same way as in the proof of Proposition 4.8.
Proceeding as in the same proof, we get $\beta \in \cF_{+}$, $u \in \cD_0$ and $\mathfrak{c}(a)$ such that\\
$$v_{p}(\frac{\bfa_{\beta}(\mathcal{E}_{\lambda',u}, \mathfrak{c}(a))}{\log_{p}(1+p)} )= \mu_{p,v_1}'(\lam).$$
From Theorem 2.2, this finishes the proof.\\
\end{proof}
\noindent\\
\begin{cor}
Theorem A holds.
\end{cor}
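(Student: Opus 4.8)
The plan is to obtain Theorem A as the conjunction of a lower bound and a matching upper bound for the single quantity $\mu(L_{\Sigma,\lam}')$, both phrased through the reduction already in hand. Under the running hypothesis $p \nmid h_\cK^- \cdot D_\cF$, Theorem 2.2 identifies
\[
\mu(L_{\Sigma,\lam}') = \inf_{(u,a) \in \cD_0 \x \cD_1,\, \beta \in \cF_+} v_p\!\left(\frac{\bfa_\beta(\mathcal{E}_{\lam',u}, \frakc(a))}{\log_p(1+p)}\right),
\]
so the whole theorem becomes a statement about the extremal $p$-adic valuation of these Fourier coefficients, and it suffices to bound this infimum above and below by $\min_{v|\frakC^-}\{\mu_p'(\lam), \mu_{p,v}'(\lam)\}$.

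For the lower bound I would invoke Proposition 3.5. Its mechanism is that, by Corollary 3.3 (a consequence of the vanishing Proposition 3.2), a non-vanishing coefficient $\bfa_\beta(\mathcal{E}_{\lam',u}, \frakc(a))$ forces exactly one non-split place $v_1$ at which a local Whittaker factor is differentiated; the Leibniz rule then writes the coefficient as a product in which the differentiated factor is divisible by $\log_p(|\varpi_{v_1}|)$ (Lemma 3.4) and each undifferentiated factor at $\frakC^-$ has valuation $\geq \mu_p(\lam_v)$ (by the estimates imported from \cite{Hs2}). Separating the cases $v_1 \mid \frakC^-$ and $v_1 \nmid \frakC^-$ yields the lower bounds $\mu_{p,v_1}'(\lam)$ and $\mu_p'(\lam)$ respectively.

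For the upper bound I would invoke Proposition 3.10, which already subsumes Proposition 3.8. The task is to exhibit a single datum $(\beta,u,a)$ realizing the minimum, and I expect this to be the main obstacle, since it is not a formal deduction. One must select local elements $\eta_v$ at an auxiliary inert prime $\frakq_1$ and at each $v \mid \frakC^-$ so that the relevant (derivative of a) Whittaker factor has exactly the extremal valuation while meeting the local root-number constraints dictated by Lemma 3.1 (Lemmas 3.6, 3.7, 3.9). These local choices must then be glued: using the self-duality identity $\tau_{\cK/\cF}(\eta) = 1$ — which follows from $W(\lam^*) = -1$ together with $W(\lam_\sigma^*) = \lam_\sigma^*(\xi)$ for $\sigma \in \Sigma$ — the idele $\eta$ can be written as $\beta N_{\cK/\cF}(a)$ with $\beta \in \cF_+$, and the approximation theorem lets one take $\beta$ $p$-adically close to each $\eta_v$ so that the local factors agree and the prime-to-$\frakD$ constant $C_\beta$ is a $p$-adic unit. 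This produces a coefficient of valuation exactly $\mu_p'(\lam)$ or $\mu_{p,v_1}'(\lam)$.

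Finally, combining the lower bound of Proposition 3.5 with the upper bound of Proposition 3.10 forces equality,
\[
\mu(L_{\Sigma,\lam}') = \min_{v|\frakC^-}\{\mu_p'(\lam), \mu_{p,v}'(\lam)\},
\]
which is precisely the assertion of Theorem A.
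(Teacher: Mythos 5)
Your proposal is correct and follows essentially the same route as the paper, whose proof of the corollary is simply the combination of the lower bound of Proposition 3.5 with the upper bound of Proposition 3.10 (the latter subsuming Proposition 3.8), both resting on the reduction of Theorem 2.2. Your added detail about the mechanisms inside those propositions accurately reflects how they are proved in the paper, so there is no gap and no genuinely different method.
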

\begin{proof}
 This follows from Proposition 3.5 and 3.10.\\
\end{proof}
\noindent\\
\section{Non-vanishing of anticyclotomic regulator}
\noindent In this section, we consider the case $\cF=\Q$. We prove the non-vanishing of the anticyclotomic regulator 
of a self-dual CM modular form with root number $-1$.\\
\\
Let the notation and hypothesis be as in Theorem A. Let $f_{\lam}$ be 
the CM modular form associated to $\lam$ i.e. an elliptic modular form with $q$-expansion $f_{\lam}(q)$ given by
\beq
f_{\lam}(q)=\sum_{\mathfrak{a}}\lam(\mathfrak{a})q^{N(\mathfrak{a})},
\eeq
where the sum is over integral ideals of $\cK$.\\
\\
We first introduce some notation. Let $\mathfrak{p}$ be a prime above $p$ in $\cK$, $\cK_{f_{\lam},\mathfrak{p}}$ be the corresponding Hecke field and $\cO$ the corresponding ring of integers.
Let $T$ be the $\mathfrak{p}$-adic Galois representation associated to $f_{\lam}$, $V= T \otimes \cK_{f_{\lam},\mathfrak{p}}$, $W=V/T$(cf. \cite[\S1.1]{A}).
Let $\Sel(\cK_{\infty}^{-},?)$ be the anticyclotomic Selmer group in [loc. cit.,\S1.3]) for $?=T, V, W$ or the Tate duals. Let $\cX(\cK_{\infty}^{-})$ and $\cX^{*}(\cK_{\infty}^{-})$
be the dual Selmer groups in [loc. cit.,\S1.3]) for $?=W$ and the Tate dual $W^*$, respectively. Let 
\beq
h_{\infty}: (\Sel(\cK_{\infty}^{-},T) \widehat{\otimes} \bar{\Q}_p) \otimes_{\bar{\Z}_{p}\powerseries{\Gamma^{-}}} (\Sel(\cK_{\infty}^{-},T^{*})^{\iota} \widehat{\otimes} \bar{\Q}_p) 
\longrightarrow \bar{\Z}_{p}\powerseries{\Gamma^{-}}
\eeq
be the anticyclotomic height pairing in [loc. cit.,\S4.4]. The notation $\Sel(\cK_{\infty}^{-},T^{*})^{\iota}$ is as in [loc. cit., \S1.3]. 
The anticyclotomic regulator $\cR_\lam$ is defined to be the characteristic ideal of the cokernel of $h_{\infty}$.\\
\begin{prop} Suppose that $p\ndivide h_{\cK}$. Then, the anticyclotomic regulator $\mathcal{R}_{\lam}$ does not vanish.
\end{prop}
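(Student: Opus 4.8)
The plan is to obtain the non-vanishing of $\mathcal{R}_\lam$ as a formal consequence of Theorem A together with the anticyclotomic main conjecture for self-dual CM modular forms proved in \cite{A}. The one essential input from \cite{A} is the following: after tensoring with $\bar{\Q}_p$, the ideal of $\bar{\Z}_p\powerseries{\Gamma^-}\otimes\bar{\Q}_p$ generated by $L_{\Sigma,\lam}'$ equals the product of the characteristic ideal $\mathrm{char}(\cX(\cK_\infty^-)_{\mathrm{tor}})$ of the torsion submodule of the dual anticyclotomic Selmer group and the anticyclotomic regulator $\mathcal{R}_\lam$ (\cf \cite[Thm.~A]{A}). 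Granting this, the proposition reduces to a single assertion, namely that $L_{\Sigma,\lam}'$ is a nonzero element of $\bar{\Z}_p\powerseries{\Gamma^-}$: were $\mathcal{R}_\lam$ the zero ideal, the right-hand side of this identity would be $(0)$, forcing $L_{\Sigma,\lam}'=0$.

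To see that $L_{\Sigma,\lam}'\neq 0$ I would apply Theorem A. Because $\cF=\Q$, we have $D_\cF=1$ and $h_\cK^-=h_\cK/h_\cF=h_\cK$, so the hypothesis $p\ndivide h_\cK$ of the present proposition coincides with the hypothesis $p\ndivide h_\cK^-\cdot D_\cF$ under which Theorem A is valid; the theorem therefore applies and yields $\mu(L_{\Sigma,\lam}')=\min_{v|\frakC^-}\{\mu_p'(\lam),\mu_{p,v}'(\lam)\}$. This quantity is finite: each local invariant $\mu_p(\lam_v)$ is finite since it is an infimum of $p$-adic valuations, and for the non-split places $v$ entering the minimum the additional term $v_p(\log_p(|\varpi_v|_v)/\log_p(1+p))$ is likewise finite. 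Since a finite $\mu$-invariant is by definition equivalent to the non-vanishing of the power series, we conclude $L_{\Sigma,\lam}'\neq 0$.

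Putting the pieces together completes the argument. The non-vanishing of $L_{\Sigma,\lam}'$ makes its image nonzero in $\bar{\Z}_p\powerseries{\Gamma^-}\otimes\bar{\Q}_p$, so the product $\mathrm{char}(\cX(\cK_\infty^-)_{\mathrm{tor}})\cdot\mathcal{R}_\lam$ is a nonzero ideal there; since a product of ideals is zero as soon as one factor is zero, both factors must be nonzero, and in particular $\mathcal{R}_\lam\neq 0$. This is exactly the statement of the proposition.

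I do not expect a genuine obstacle inside this proof: all of the difficulty is upstream, in Theorem A, whose entire purpose is to certify that the cyclotomic derivative $L_{\Sigma,\lam}'$ is non-trivial modulo $p$. The only points demanding care when splicing in \cite{A} are bookkeeping ones, namely to confirm that the characteristic-ideal factorization there is formulated for the same regulator $\mathcal{R}_\lam$—the characteristic ideal of $\coker h_\infty$—as fixed above, and that the Selmer-theoretic hypotheses of \cite[Thm.~A]{A} are implied by $p\ndivide h_\cK$ in the CM setting. Once these matches are verified, the deduction is immediate.
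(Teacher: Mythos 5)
Your proposal is correct and follows essentially the same route as the paper: it invokes the factorization $\cX\,\cR_\lam=(L_{\Sigma,\lam}')$ in $\Zbarp\powerseries{\Gamma^-}\otimes_{\Zp}\Qp$ from \cite{A} (the paper cites \cite[Thm.~2.2]{A} rather than Thm.~A there, a bookkeeping point only) and then deduces $L_{\Sigma,\lam}'\neq 0$ from the finiteness of the $\mu$-invariant given by Theorem A, noting that $\cF=\Q$ makes the hypothesis $p\ndivide h_\cK^-\cdot D_\cF$ coincide with $p\ndivide h_\cK$. Your write-up is in fact more explicit than the paper's three-line proof about why the minimum of the local invariants is finite and why a vanishing factor would force $L_{\Sigma,\lam}'=0$, but the underlying argument is identical.
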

\begin{proof}
From [loc. cit., Thm. 2.2], $\cX^{*}(\cK_{\infty}^{-})$ is a $\cO \powerseries{\Gamma^{-}}$-module of rank one. 
Let $\cX \subset \cO \powerseries{\Gamma^{-}}$ be the characteristic ideal of the torsion sub-module of $\cX^{*}(\cK_{\infty}^{-})$.\\
\\
From \cite[Thm. 2.2]{A},\\
\beq \cX \cR_{\lam}= (L_{\Sigma,\lam}^{'}) \eeq
as ideals of $\Zbarp \powerseries{\Gamma^{-}} \otimes_{\Zp} \Qp$.\\
\\
The proposition thus follows by Theorem A.\\
\noindent\\
\end{proof}
\begin{remark} When $\lam$ is a Gr\"{o}ssencharacter of an elliptic curve over $\Q$ with CM by $\cO_\cK$,
the above proposition is proven via Iwasawa theory of CM elliptic curves and a non-vanishing 
result of Rohrlich (cf. \cite[App.]{AH}).\\
\end{remark}

%\addcontentsline{toc}{section}{References}
\thebibliography{99}
\bibitem{AH} A. Agboola and B. Howard (with an appendix by K. Rubin), Anticyclotomic Iwasawa Theory of CM elliptic curves, \emph{Ann. Inst. Fourier (Grenoble)},
4 (2006) no. 6, 1374-1398.
\bibitem{A} T. Arnold, Anticyclotomic main conjectures for CM modular forms, \emph{J. Reine Angew. Math.}, 606 (2007), 41-78. 
%\bibitem{AG} F. Andreatta and E. Goren, \emph{Hilbert modular forms: mod $p$ and $p$-adic aspects}, Mem. Amer. Math. Soc., 173 (2005), no. 819.
\bibitem{HT} H. Hida and J. Tilouine, Anticyclotomic Katz $p$-adic L-functions and congruence modules, \emph{Ann. Sci. $\acute{E}cole$ Norm. Sup.},
(4) 26 (1993), no. $2$, 189-259.
%\bibitem{Hi0} H. Hida, \emph{ $p$-adic automorphic forms on Shimura varieties}, Springer Monographs in Mathematics, Springer-Verlag, New York, $2004$.
\bibitem{Hi} H. Hida, The Iwasawa $\mu$-invariant of $p$-adic Hecke L-functions, \emph{Ann. of Math.}, 172 (2010), no. $1$, 41-137.
\bibitem{Hi1} H. Hida, Vanishing of the $\mu$-invariant of $p$-adic Hecke L-functions, \emph{Compos. Math.}, 147 (2011), 1151-1178.
\bibitem{Hs1} M.-L. Hsieh, On the $\mu$-invariant of anticyclotomic $p$-adic L-functions for CM fields, to 
appear in \emph{J. Reine Angew. Math.}, available at "http://www.math.ntu.edu.tw/$\sim$ mlhsieh/research.htm", 2012.
\bibitem{Hs2} M.-L. Hsieh, On the non-vanishing of Hecke L-values modulo $p$, \emph{Amer. J. Math.}, 134 (2012), no. 6, 1503-1539.
\bibitem{Hs3} M.-L. Hsieh, Eisenstein congruence on unitary groups and Iwasawa main conjecture for CM fields, to appear in \emph{J. Amer. Math. Soc.}, 
available at "http://www.math.ntu.edu.tw/$\sim$ mlhsieh/research.htm", 2013.
\bibitem{Ka} N. M. Katz, $p$-adic L-functions for CM fields, \emph{Invent. Math.}, 49(1978), no. 3, 199-297.
\bibitem{MS} A. Murase and T. Sugano, Local theory of primitive theta functions, \emph{Compos. Math.}, 123 (2000), no. 3, 273-302.
\bibitem{Ru} K. Rubin, The ``main conjectures" of Iwasawa theory for imaginary quadratic fields, \emph{Invent. Math.}, 103 (1991), no. 1, 25-68.
%\bibitem{Sh} G. Shimura, \emph{Abelian varieties with complex multiplication and modular functions}, Princeton Mathematical Series, 
%46. Princeton University Press, Princeton, NJ, 1998.
\bibitem{Ta} J. Tate, Number theoretic background, \emph{Automorphic forms, representations and L-functions}, Proc. Sympos. Pure Math., XXXIII, Part 2, Amer. Math. Soc., Providence, R.I., 1979, 3-26.

%\end{thebibliography}

\end{document}